\newif\ifbqpd 
\newif\ifhozna \hoznafalse
\begin{document}



\let\goth\mathfrak


\def\theoremname{Theorem}
\def\lemmaname{Lemma}
\def\corollaryname{Corollary}
\def\propositionname{Proposition}
\def\factname{Fact}
\def\remarkname{Remark}
\def\examplename{Example}
\def\notename{Note}

\newtheorem{thm}{\theoremname}[section]
\newtheorem{lem}[thm]{\lemmaname}
\newtheorem{cor}[thm]{\corollaryname}
\newtheorem{prop}[thm]{\propositionname}
\newtheorem{fact}[thm]{\factname}
\newtheorem{note}[thm]{\notename}
\newtheorem{exmx}[thm]{\examplename}
\newenvironment{exm}{\begin{exmx}\normalfont}{\end{exmx}}

\newtheorem{rem}[thm]{\remarkname}

\newtheorem*{note*}{\notename}

\def\myend{{}\hfill{\small$\bigcirc$}}

\newtheorem{cnstrx}[thm]{Construction}
\newenvironment{constr}{\begin{cnstrx}\normalfont}{\myend\end{cnstrx}}

\def\myend{{}\hfill{\small$\bigcirc$}}

\newenvironment{ctext}{%
  \par
  \smallskip
  \centering
}{%
 \par
 \smallskip
 \csname @endpetrue\endcsname
}

\newenvironment{cmath}{%
  \par
  \smallskip
  \centering
  $
}{%
  $
  \par
  \smallskip
  \csname @endpetrue\endcsname
}


\newcounter{sentence}
\def\thesentence{\roman{sentence}}
\def\labelsentence{\upshape(\thesentence)}

\newenvironment{sentences}{%
        \list{\labelsentence}
          {\usecounter{sentence}\def\makelabel##1{\hss\llap{##1}}
            \topsep3pt\leftmargin0pt\itemindent40pt\labelsep8pt}%
  }{%
    \endlist}


\newcounter{typek}\setcounter{typek}{0}
\def\thetypek{\roman{typek}}
\def\typitem#1{\refstepcounter{typek}\item[\normalfont(\roman{typek};\hspace{1ex}{#1})\quad]}
\newenvironment{typcription}{\begin{description}\itemsep-2pt\setcounter{typek}{0}}{%
\end{description}}
\newcounter{ostitem}\setcounter{ostitem}{0}


\newcommand*{\sub}{\raise.5ex\hbox{\ensuremath{\wp}}}
\def\Aut{\mathrm{Aut}}
\def\Fix{\mathrm{Fix}}
\newcommand*{\struct}[1]{{\ensuremath{\langle #1 \rangle}}}
\def\dom{\mathrm{Dom}}
\def\rng{\mathrm{Rng}}

\newcommand{\msub}{\mbox{\large$\goth y$}}    
\def\skros(#1,#2){{\{ #1,#2 \}}}
\def\LineOn(#1,#2){\overline{{#1},{#2}\rule{0em}{1,5ex}}}
\def\inc{\mathrel{\strut\rule{3pt}{0pt}\rule{1pt}{9pt}\rule{3pt}{0pt}\strut}}
\def\lines{{\cal L}}

\def\VerSpace(#1,#2){{\bf V}_{{#2}}({#1})}
\def\GrasSpace(#1,#2){{\bf G}_{{#2}}({#1})}
\def\GrasSpacex(#1,#2){{\bf G}^\ast_{{#2}}({#1})}
\def\VeblSpSymb{{\bf P}\mkern-10mu{\bf B}}
\def\VeblSpace(#1){\VeblSpSymb({#1})}

%
\def\konftyp(#1,#2,#3,#4){\left( {#1}_{#2}\, {#3}_{#4} \right)}
\newcount\liczbaa
\newcount\liczbab
\def\binkonf(#1,#2){\liczbaa=#2 \liczbab=#2 \advance\liczbab by -2
\def\doa{\ifnum\liczbaa = 0\relax \else
\ifnum\liczbaa < 0 \the\liczbaa \else +\the\liczbaa\fi\fi}
\def\dob{\ifnum\liczbab = 0\relax \else
\ifnum\liczbab < 0 \the\liczbab \else +\the\liczbab\fi\fi}
\konftyp(\binom{#1\doa}{2},#1\dob,\binom{#1\doa}{3},3) }
\let\binconf\binkonf

\def\perspace(#1,#2){\mbox{\boldmath$\Pi$}({#1},{#2})}
\def\starof(#1){{\mathop{\mathrm S}(#1)}}
\def\topof(#1){{\mathop{\mathrm T}(#1)}}
\def\bijfam{{\cal S}}
\def\id{\mathrm{id}}


\def\psts{partial Steiner triple system}
\def\PSTS{{\sf PSTS}}
\def\BSTS{{\sf BSTS}}
\def\STP{{\sf STP}}

\def\brak{\ifhozna\relax\else{\warning[TO BE COMPLETED]}\fi}



\title[Combinatorial quasi Veronesians]{%
Configurations related to combinatorial Veronesians representing a skew perspective}
\author{Agata Bazylewska-Zejer, Ma{\l}gorzata Pra{\.z}mowska, \\ Krzysztof Pra{\.z}mowski}

\maketitle

\begin{abstract}
  A combinatorial object representing schemas of, possibly skew, perspectives,
  called {\em a configuration of skew perspective} has been defined in \cite{klik:binom}, \cite{maszko}.
  Here we develop the theory of configurations generalizing perspectives defined in
  combinatorial Veronesians.
  The complete classification of thus obtained $\konftyp(15,4,20,3)$-configurations is presented.
\newline
{\em key words}: Veblen (Pasch) configuration, combinatorial Veronesian, 
binomial configuration, complete (free sub)subgraph, perspective.
\\
MSC(2000): 05B30, 51E30.
\end{abstract}

\section*{Introduction}

A project to characterize and classify so called binomial \psts s via the arrangement of their
free complete subgraphs was started in \cite{klik:binom}.
In particular, we know that if a configuration $\goth K$ contains the maximal number (with respect to its parameters,
i.e. $= m+2$, where $m$ is the rank of a point in $\goth K$) of free $K_{m+1}$-subgraphs then $\goth K$
is a so called combinatorial Grassmannian (cf. \cite{perspect}) and if
$\goth K$ contains $m$ free complete subgraphs then it is a multi veblen configuration (cf. \cite{mveb2proj}).
One of the most fruitful observation used to obtain a required classification is quoted 
in \ref{fct:zepers} after \cite{klik:binom}:
\begin{quotation}\em
  a configuration $\goth K$ with two free subgraphs $K_{m+1}$ can be considered as a schema 
  of an abstract perspective between  these graphs.
\end{quotation}
Let us stress on the words {\em schema} and {\em abstract}: 
`ordinary' projections, as used and investigated e.g. in \cite{projmono2}, \cite{hilbert}, or \cite{projchain}
can be considered as examples (realizations) of our perspectives, but configurations considered in this
paper do not necessarily have any realization in a desarguesian projective space.

The above observation enables us to reduce the problem to a classification of `line perspectives'
(maps between edges of graphs, we call them also `skews') and a classification of `axial configurations'
(defined on intersection points of lines containing perspective edges); these axial configurations have
vertices with on $2$ smaller point rank.
If $\goth K$ has three free $K_{m+1}$, a similar technique involving a triple perspective can be used;
for $m=4$ the complete classification was given in \cite{STP3K5}.
If the line perspective preserves intersection of edges a simple theory presenting the case can be developed
(see \cite{maszko}). In result, the complete classification of such `cousins' of the Desargues configuration
for $m=4$ could be obtained -- and presented in \cite{maszko:male}.
Even in this small case $m=4$ there are, generally, ${10}! \geq 3\cdot 10^6$ admissible perspectives.
One has to look for some ways to distinguish among them some more regular and interesting.

On a second side, there is a family of known and investigated configurations other than combinatorial Grassmannians: 
combinatorial Veronesians.
This family contains binomial {\psts s} with exactly three maximal free subgraphs. Computing formulas which 
define in this case the line perspectives we obtain a class of functions that can determine an (interesting)
family of configurations: keeping invariant skew taken from the theory of combinatorial Veronesians 
we vary axial configurations.

Rudiments of the theory of so obtained Veronese-like perspectivities are presented in this note.
We close the paper with the complete classification of $\konftyp(15,4,20,3)$-configurations which can be 
presented as a such Veronese-like perspective: there are $18$ such configurations, and $14$ of them have
not been found before.

\tableofcontents

\section{Underlying ideas and basic definitions}\label{sec:under}

Let us begin with introducing some, standard, notation.
Let $X$ be an arbitrary set.
The symbol $\bijfam_X$ stands for the family of permutations of $X$.
Let $k$ be a positive integer; 
we write $\sub_k(X)$ for the family of $k$-element subsets of $X$.
Then $K_X = \struct{X,\sub_2(X)}$ is the complete graph on $X$;
$K_n$ is $K_X$ for any $X$ with $|X| = n$. Analogously, $\bijfam_n = \bijfam_X$.
\par\noindent
A {\em $\konftyp(\nu,r,b,\varkappa)$-configuration} is a configuration 
(a {\em partial linear space} i.e. an incidence structure with blocks ({\em lines})
pair wise intersecting in at most a point)
with
$\nu$ points, each of rank $r$, and $b$ lines, each of rank (size) $\varkappa$.
A {\em \psts}\ (in short: a \PSTS) is a partial linear space 
with all the lines of size $3$.
A $\binkonf(n,0)$-configuration is a \psts, it is called 
a {\em binomial \psts}.
\par\noindent
We say that a graph $\cal G$ is {\em freely contained} in a 
configuration $\goth B$ iff 
the vertices of $\cal G$ are points of $\goth B$, 
each edge $e$ of $\cal G$ is contained in a line $\overline{e}$ of $\goth B$,
the above map $e \mapsto \overline{e}$ is an injection, 
and lines of $\goth B$ which contain disjoint edges of $\cal G$ do not 
intersect in $\goth B$.
If $\goth B$ is a $\binkonf(n,0)$-configuration and ${\cal G} = K_X$
then $|X| + 1 \leq n$. Consequently, $K_{n-1}$ is a maximal complete graph
freely contained in a binomial $\binkonf(n,0)$-configuration.
Further details of this theory are presented in \cite{klik:binom},
relevant results will be quoted in the text, when needed.

\begin{constr}[{\cite[Constr. 1.1]{maszko}}]\label{def:pers}
  Let $I$ be a nonempty finite set, $n := |I|\geq 2$.
  In most parts, without loss of generality, we assume that
  $I = I_n =  \{ 1,\ldots,n \}$.
  Let 
    $A = \{a_i\colon i\in I \}$ and $B = \{b_i\colon i\in I\}$
  be two disjoint $n$-element sets, let $p\notin A\cup B$.
\newline
  Then we take a $\binom{n}{2}$-element set
  $C = \{c_u\colon u\in\sub_2(I)\}$
  disjoint with $A\cup B\cup\{p\}$.
  Set 
\begin{equation*}
  {\cal P} = A\cup B\cup\{p\}\cup C.
\end{equation*}
  Let us fix a permutation $\sigma$ of $\sub_2(I)$ and write
\begin{eqnarray*}
  {\cal L}_p & := & \big\{ \{ p,a_i,b_i \}\colon i\in I \big\},
  \\
  {\cal L}_A & := & \big\{ \{ a_i,a_j,c_{\{ i,j \}} \}\colon \{i,j\}\in\sub_2(I) \big\},
  \\
  {\cal L}_B & := & \big\{ \{ b_i,b_j,c_{\sigma^{-1}(\{ i,j \})} \} \colon \{i,j\}\in\sub_2(I) \big\}.
\end{eqnarray*}
  Finally, let ${\cal L}_C$ be a family of $3$-subsets of $C$ such that
  ${\goth N} = \struct{C,{\cal L}_C}$ is a 
  $\binconf(n,0)$-configuration.
  Set
$$
  {\cal L} = {\cal L}_p \cup {\cal L}_A\cup {\cal L}_B\cup {\cal L}_C
  \text{ and }
  \perspace(n,\sigma,{\goth N}) := \struct{{\cal P},{\cal L}}.
$$
  The structure $\perspace(n,\sigma,{\goth N})$ will be referred to as a 
  {\em skew perspective} with the {\em skew} $\sigma$.
\end{constr}
We frequently shorten $c_{\{i,j\}}$ to $c_{i,j}$.
Sometimes the parameter $\goth N$ will not be essential and then it will be omitted,
we shall write simply $\perspace(n,\sigma)$.
In essence, the names ``$a_i$", ``$c_{i,j}$" are -- from the point of view of 
mathematics -- arbitrary, and could be replaced by any other labelling
(cf. analogous problem of labelling in \cite[Constr. 3, Repr. 3]{pascvebl}
or in \cite[Rem 2.11, Rem 2,13]{STP3K5}, \cite[Exmpl. 2]{pascvebl}).
Formally, one can define $J = I \cup \{a,b\}$,
$x_i = \{x,i\}$ for $x\in\{a,b\} =: p$ and $i\in I$, and $c_u = u$ for $u\in \sub_2(I)$.
After this identification $\perspace(n,\sigma)$ becomes a structure defined
on $\sub_2(J)$.
Then, it is easily seen that
\begin{equation}\label{paramy}
  \perspace(n,\sigma,{\goth N}) \text{ is a }\binconf(n,+2)\text{ configuration}.
\end{equation}
In particular, it is a \psts,
so we can use standard notation:
$\LineOn(x,y)$ stands for the line which joins two collinear points $x,y\in{\cal P}$,
and then we define on $\cal P$ the partial operation $\oplus$ with the following requirements:
$x\oplus x = x$, $\{x,y,x\oplus y\}\in{\cal L}$ whenever $\LineOn(x,y)$ exists.
Observe that 
(cf. \cite[Eq. (1), the definition of 
{\em combinatorial Grassmannian} $\GrasSpace(n,2)$]{perspect})
\begin{equation}\label{gras:pers}
  \GrasSpace(n+2,2) = \GrasSpace(J,2) = \struct{\sub_2(J),\sub_3(J),\subset}
  \cong \perspace(n,\id_{I_n},{\GrasSpace(I_n,2)}).
\end{equation}

It is clear that $A^\ast = A \cup \{p\}$ and $B^\ast = B\cup\{p\}$ are two $K_{n+1}$-graphs
freely contained in $\perspace(n,\sigma,{\goth N})$.
Applying the results \cite[Prop. 2.6 and Thm. 2.12]{klik:binom} 
we immediately obtain the following fact.
\begin{fact}\label{fct:zepers}
  Let $N = n+2$. The following conditions are equivalent.
  \begin{sentences}\itemsep-2pt
  \item
    $\goth M$ is a binomial $\binkonf(N,0)$-configuration which freely contains 
    two $K_{N-1}$-graphs.
  \item
    ${\goth M}\cong \perspace(n,\sigma,{\goth N})$ for a $\sigma\in \bijfam_{\sub_2(I_n)}$
    and a $\binkonf(n,0)$-configuration $\goth N$ defined on $\sub_2(I_n)$.
  \end{sentences}
\end{fact}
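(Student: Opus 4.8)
The plan is to prove the two implications separately. The implication (ii)$\Rightarrow$(i) is essentially already recorded: by \eqref{paramy} the space $\perspace(n,\sigma,{\goth N})$ is a $\binconf(n,+2)$-configuration, and since $N=n+2$ its parameters $\konftyp(\binom{N}{2},N-2,\binom{N}{3},3)$ are exactly those of a binomial $\binkonf(N,0)$-configuration, so it is one; that it freely contains two $K_{N-1}$-graphs was already observed, namely $A^\ast=A\cup\{p\}$ and $B^\ast=B\cup\{p\}$. All the content sits in (i)$\Rightarrow$(ii). The workhorse there is the following consequence of free containment together with the uniform rank of $\goth M$: if $V$ is the vertex set of a freely contained $K_{N-1}$, then each $x\in V$ has rank $N-2$ and is joined inside $K_{N-1}$ to exactly $N-2$ other vertices, so the edges at $x$ already exhaust every line through $x$; since $e\mapsto\overline{e}$ is injective, no line can carry three vertices of $V$. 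Hence \emph{every line of $\goth M$ meets $V$ in $0$ or $2$ points.}

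The key structural step is that the two given graphs $G_1,G_2$, with vertex sets $V_1,V_2$, meet in exactly one point $p$. The restriction of $\goth M$ to $V_1\cap V_2$ is again a freely contained complete graph, so $|V_1\cap V_2|\le N-1$ with equality only when $G_1=G_2$; excluding the values $0$ and $\ge 2$ is the substantive point, and here I would invoke \cite[Prop. 2.6]{klik:binom}. Granting a single common point $p$, set $A=V_1\setminus\{p\}$, $B=V_2\setminus\{p\}$ and $C={\cal P}\setminus(V_1\cup V_2)$; then $|A|=|B|=N-2=n$ and a short count gives $|C|=\binom{n}{2}$. Applying the $0$-or-$2$ property to $V_1$ and to $V_2$ simultaneously classifies every line by its traces on $A$, $B$, $\{p\}$, $C$: a line through $p$ must be $\{p,a,b\}$ with $a\in A$, $b\in B$ (family ${\cal L}_p$); a line meeting $V_1$ but avoiding $p$ must be $\{a_i,a_j,c\}$ with $c\in C$ (family ${\cal L}_A$); symmetrically one gets ${\cal L}_B$; and the remaining lines lie inside $C$ (family ${\cal L}_C$). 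Reading off the lines through $p$ produces a bijection between $A$ and $B$, which I use to index $A=\{a_i\}$, $B=\{b_i\}$ over a set $I$ with $|I|=n$ so that the lines of ${\cal L}_p$ are exactly $\{p,a_i,b_i\}$. Then $\{i,j\}\mapsto$ third point of $\LineOn(a_i,a_j)$ and $\{i,j\}\mapsto$ third point of the $B$-edge $b_ib_j$ are two maps $\sub_2(I)\to C$, the permutation $\sigma$ of $\sub_2(I)$ is defined by comparing them, and one sets ${\goth N}=\struct{C,{\cal L}_C}$.

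The main obstacle is exactly twofold, and is the part I would borrow wholesale from \cite[Prop. 2.6 and Thm. 2.12]{klik:binom}: first the single-point intersection above, and second the claim that the two third-point maps are \emph{bijections} onto $C$, equivalently that each $c\in C$ lies on exactly one ${\cal L}_A$-line and one ${\cal L}_B$-line, so that ${\goth N}$ has uniform point rank $n-2$ and is a genuine binomial $\binkonf(n,0)$-configuration. Global counting already forces the totals $|{\cal L}_p|=n$, $|{\cal L}_A|=|{\cal L}_B|=\binom{n}{2}$ and $|{\cal L}_C|=\binom{n}{3}$; the delicate step is upgrading these averages to the pointwise statement, after which the relabelling above is, by the very definition in \ref{def:pers}, an isomorphism $\goth M\cong\perspace(n,\sigma,{\goth N})$, completing (i)$\Rightarrow$(ii). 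The degenerate choice $\sigma=\id$, ${\goth N}=\GrasSpace(I_n,2)$ recovers \eqref{gras:pers}.
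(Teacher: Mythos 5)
Your proposal is correct and follows essentially the same route as the paper: the paper treats direction (ii)$\Rightarrow$(i) as immediate from the construction (the observation \eqref{paramy} and the free subgraphs $A^\ast$, $B^\ast$), and obtains (i)$\Rightarrow$(ii) by invoking exactly the results \cite[Prop.~2.6 and Thm.~2.12]{klik:binom} that you defer to for the two substantive steps (the single common point $p$ and the bijectivity of the third-point maps). Your additional scaffolding (the $0$-or-$2$ intersection property and the line classification) is a sound reconstruction of what those cited results provide, so the two arguments coincide in substance.
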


The map
$$
  \pi = \big( a_i \longmapsto b_i,\; i\in I \big)
$$
is a {\em point-perspective} of $K_A$ onto $K_B$ with centre $p$.
Moreover, the map
$$
  \xi = \big( \LineOn(a_i,a_j) \longmapsto \LineOn(b_{i'},b_{j'}), \;
  \sigma( \{ i,j \} ) = \{i',j'\} \in \sub_2(I) \big)
$$
is a {\em line-perspective}, where $\goth N$ is the axial configuration of our perspective.
With each permutation $\sigma_0\in \bijfam_I$ we associate the permutation $\overline{\sigma_0}$ defined by
\begin{equation}\label{lem:meetinvar}    
          \overline{\sigma_0}(\{ i,j \})
	   =\{ \sigma_0(i),\sigma_0(j) \}
      \end{equation}
for every $\{i,j\}\in\sub_2(I)$.
\begin{note}\normalfont
  If $\sigma_0\in \bijfam_I$ we frequently identify $\sigma_0$, $\overline{\sigma_0}$,
  and the corresponding map $\xi$.
  Consequently, if $\sigma\in \bijfam_I$ we write
  $\perspace(n,\sigma,{\goth N})$ in place of
  $\perspace(n,\overline{\sigma},{\goth N})$.
\end{note}

\begin{prop}[{comp. \cite[Prop. 2.2]{maszko}}]\label{prop:iso0}
  Let $f \in \bijfam_{{\cal P}}$, $f(p) = p$, $\sigma_1,\sigma_2 \in \bijfam_{\sub_2(I)}$,
  and 
  ${\goth N}_1, {\goth N}_2$ be two $\binkonf(n,0)$- configurations defined
  on $\sub_2(I)$.
  The following conditions are equivalent.
  \begin{sentences}\itemsep-2pt
   \item\label{propiso0:war1}
     $f$ is an isomorphism 
     of $\perspace(n,\sigma_1,{\goth N}_1)$ onto $\perspace(n,\sigma_2,{\goth N}_2)$.
   \item\label{propiso0:war2}
     There is $\varphi \in \bijfam_I$ such that
     one of the following holds
     \begin{eqnarray} 
     \label{iso0:war1-1}
        \overline{\varphi} \text{ ( comp. \eqref{lem:meetinvar})} & {\text is } & \text{ an isomorphism of } 
            {\goth N}_1 \text{ onto } {\goth N}_2,
     \\ 
     \label{propiso0:typ1}
       f(x_i) = x_{\varphi(i)},\; x = a,b,\; &&
       f(c_{\{i,j\}}) = c_{\{\varphi(i),\varphi(j)\}},
       \quad i,j\in I, i\neq j,
     \\ \label{iso0:war2}
       \overline{\varphi} \circ \sigma_1 & = & \sigma_2 \circ \overline{\varphi},
     \end{eqnarray}
     or
     \begin{eqnarray} 
     \label{iso0:war1-2}
        \sigma_2^{-1}\overline{\varphi} & \text{  is } & \text{ an isomorphism of } 
            {\goth N}_1 \text{ onto } {\goth N}_2,
     \\ 
     \label{propiso0:typ2}
       f(a_i) = b_{\varphi(i)},\; f(b_i) = a_{\varphi(i)}, &&
        f(c_{\{i,j\}}) = c_{\sigma_2^{-1}\{\varphi(i),\varphi(j)\}},
       \; i,j\in I, i\neq j,
     \\ \label{iso0:war3}
       \overline{\varphi} \circ \sigma_1 & = & \sigma_2^{-1} \circ \overline{\varphi}.
     \end{eqnarray}
  \end{sentences}
\end{prop}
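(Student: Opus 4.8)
The plan is to prove the two implications separately; the implication \ref{propiso0:war2}$\,\Rightarrow\,$\ref{propiso0:war1} is a direct verification, while the converse carries the real content. For \ref{propiso0:war2}$\,\Rightarrow\,$\ref{propiso0:war1} I would simply check that the prescribed $f$ carries each of the four line families ${\cal L}_p,{\cal L}_A,{\cal L}_B,{\cal L}_C$ of the source onto the corresponding family of the target. In the first alternative, ${\cal L}_p\to{\cal L}_p$ and ${\cal L}_A\to{\cal L}_A$ are immediate from \eqref{propiso0:typ1}, ${\cal L}_C\to{\cal L}_C$ is \eqref{iso0:war1-1}, and ${\cal L}_B\to{\cal L}_B$ uses the commutation \eqref{iso0:war2}; the second alternative is analogous with the roles of ${\cal L}_A,{\cal L}_B$ interchanged and \eqref{iso0:war3}, \eqref{iso0:war1-2} replacing \eqref{iso0:war2}, \eqref{iso0:war1-1}. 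This is a routine substitution, so I would keep it to a line or two.

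For \ref{propiso0:war1}$\,\Rightarrow\,$\ref{propiso0:war2} I would first recover the blocks $A\cup B$, $C$, $\{p\}$ as invariants of $f$. Since $f(p)=p$ and every point of $A\cup B$ is collinear with $p$ while no point of $C$ is, the set $C$ is exactly the set of points $\neq p$ not collinear with $p$; hence $f(C)=C$ and $f(A\cup B)=A\cup B$, and $f$ permutes the lines through $p$, i.e.\ the family ${\cal L}_p$, so each pair $\{a_i,b_i\}$ is sent to some pair $\{a_j,b_j\}$. The key step is then to show that $f$ treats the two sides coherently: either $f(A)=A,\ f(B)=B$, or $f(A)=B,\ f(B)=A$. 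Here I would use the collinearity pattern: in the source $a_i$ and $a_k$ ($i\neq k$) are joined by an ${\cal L}_A$-line whose third point lies in $C$, whereas an $A$-point and a $B$-point are collinear only through a line of ${\cal L}_p$, whose points avoid $C$. If the assignment were mixed, say $f(a_i)\in A$ but $f(a_k)\in B$, then $f$ would send an ${\cal L}_A$-line (whose image contains a $C$-point, as $f(C)=C$) to the unique line joining an $A$-point and a $B$-point, which passes through $p$ and meets $C$ in no point — a contradiction. So the assignment is uniform, yielding the two cases.

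In the first case I would define $\varphi\in\bijfam_I$ by $f(a_i)=a_{\varphi(i)}$; the action on ${\cal L}_p$ forces $f(b_i)=b_{\varphi(i)}$, and pushing the ${\cal L}_A$-lines through $f$ forces $f(c_{\{i,j\}})=c_{\{\varphi(i),\varphi(j)\}}$, which is \eqref{propiso0:typ1}. The image of a generic ${\cal L}_B$-line must again be an ${\cal L}_B$-line, giving $\overline{\varphi}\circ\sigma_1^{-1}=\sigma_2^{-1}\circ\overline{\varphi}$ and hence \eqref{iso0:war2}; and the restriction $f|_C$, which is precisely $\overline{\varphi}$, carries the lines of $\goth{N}_1$ onto those of $\goth{N}_2$, giving \eqref{iso0:war1-1}. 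The second case is handled identically after setting $f(a_i)=b_{\varphi(i)}$: now ${\cal L}_A$-lines go to ${\cal L}_B$-lines and conversely, producing the twisted formula for $f|_C$ in \eqref{propiso0:typ2}, the relation \eqref{iso0:war3}, and the condition \eqref{iso0:war1-2} that $\sigma_2^{-1}\overline{\varphi}$ be an isomorphism $\goth{N}_1\to\goth{N}_2$.

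The main obstacle I anticipate is the coherence step, ruling out a mixed side-assignment; once the collinearity pattern between $A$, $B$ and $C$ is set up this is short, but it is exactly what legitimizes the clean dichotomy in \ref{propiso0:war2}. The remaining work is bookkeeping of the conjugation identities, where the one genuine subtlety is keeping the inverses straight: the $\sigma^{-1}$ built into ${\cal L}_B$ turns the commutation read off from the ${\cal L}_B$-lines into \eqref{iso0:war2} (respectively \eqref{iso0:war3}) only after an inversion, so I would carry out that rearrangement explicitly rather than leave it to inspection.
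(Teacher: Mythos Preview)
The paper does not supply its own proof of this proposition: it is stated with the attribution ``comp.\ \cite[Prop.~2.2]{maszko}'' and no argument follows. Your proposal is correct and is the natural proof one would expect to find in the cited reference: recover the partition $\{p\}\,/\,A\cup B\,/\,C$ from collinearity with $p$, use the non-collinearity of $a_j$ with $b_l$ for $j\neq l$ to force the side-coherence dichotomy, and then read off $\varphi$ and the conjugation identities from the images of ${\cal L}_A$-, ${\cal L}_B$- and ${\cal L}_C$-lines. Your handling of the inverses (the $\sigma^{-1}$ built into ${\cal L}_B$) is the only place one can slip, and you have it right.
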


\begin{lem}[{Comp. \cite[Lem. 2.1]{maszko}}]\label{lem:nextgraf00}
    Assume that $\perspace(n,\sigma,{\goth N})$ freely contains a complete $K_{n+1}$-graph $G\neq K_{A^\ast},K_{B^\ast}$,
    $\sigma\in \bijfam_{\sub_2(I)}$.
    Then there is $i_0 \in I$ such that 
    $\starof(i_0) = \{ c_u\colon i_0\in u\in\sub_2(I) \}$
    is a collinearity clique in $\goth N$ freely contained in it
    and $\sigma$ satisfies
    \begin{equation}\label{war:nextgraf00}
      i_0 \in u \implies i_0 \in \sigma(u) \text{ for every } u \in\sub_2(I).
    \end{equation}
    Moreover, 
    \begin{equation}\label{wzor:extragraf00}
    G = G_{(i_0)} \; := \; \{ a_{i_0},b_{i_0} \} \cup \starof(i_0).
  \end{equation}
\end{lem}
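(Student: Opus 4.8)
The plan is to pin down the vertex set of $G$ by bookkeeping how its $n+1$ vertices distribute among the four point classes $\{p\}$, $A$, $B$ and $C$, using only the incidences recorded in ${\cal L}_p,{\cal L}_A,{\cal L}_B,{\cal L}_C$ together with the free-containment requirement. First I would eliminate the centre: the only lines through $p$ are the members $\{p,a_i,b_i\}$ of ${\cal L}_p$, so the neighbours of $p$ are exactly the points of $A\cup B$, and $p,a_i,b_i$ cannot all three be vertices of a freely contained graph, since then the distinct edges $\LineOn(p,a_i)$ and $\LineOn(p,b_i)$ would share the line $\{p,a_i,b_i\}$, violating injectivity of $e\mapsto\overline{e}$. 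Hence if $p\in G$ then $G=\{p\}\cup\{x_i\colon i\in I\}$ with $x_i\in\{a_i,b_i\}$; but $a_i$ is collinear with no $b_j$ for $j\neq i$, so all the $x_i$ are of one kind and $G$ is $K_{A^\ast}$ or $K_{B^\ast}$, against the hypothesis. Thus $p\notin G$.

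Writing $S=\{i\colon a_i\in G\}$, $T=\{i\colon b_i\in G\}$, $U=\{u\colon c_u\in G\}$, the collinearity facts read off the line families (write $x\sim y$ for collinearity) are: $a_i\sim b_j$ iff $i=j$; $a_i\sim c_u$ iff $i\in u$; $b_i\sim c_u$ iff $i\in\sigma(u)$; and $c_u\sim c_{u'}$ iff they lie on a common line of ${\goth N}$. From $a_i\sim b_j\Rightarrow i=j$ it follows that if both $S$ and $T$ are nonempty then $S=T=\{i_0\}$ for a single index $i_0$. The one-sided cases are then excluded: if, say, $T=\emptyset$ but $S\neq\emptyset$, then picking $i\in S$ shows every $u\in U$ contains $i$, whence $|U|\le n-1$ and so $|S|\ge 2$; a second $i'\in S$ forces every $u\in U$ to equal $\{i,i'\}$, giving $|U|=1$, $|S|=n$ and $n\le 2$, a configuration which for $n=2$ is a single line $\{a_i,a_j,c_{\{i,j\}}\}$ and fails free containment. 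The symmetric case $S=\emptyset\neq T$ is identical after replacing $u$ by $\sigma(u)$. Finally $S=T=\emptyset$ would make $G$ a free $K_{n+1}$ inside ${\goth N}$, impossible since a binomial $\binkonf(n,0)$-configuration carries no free complete graph larger than $K_{n-1}$. Therefore $S=T=\{i_0\}$.

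It then remains to read off $U$ and the two asserted conditions. With $S=T=\{i_0\}$ we have $|U|=n-1$, and $a_{i_0}\sim c_u$ for every $u\in U$ forces $i_0\in u$; since $\{u\colon i_0\in u\}$ has exactly $n-1$ elements, necessarily $U=\{u\colon i_0\in u\}$, so $\{c_u\colon u\in U\}=\starof(i_0)$ and $G=G_{(i_0)}$. The collinearity of $b_{i_0}$ with each such $c_u$ means $i_0\in\sigma(u)$ for every $u\ni i_0$, which is precisely \eqref{war:nextgraf00}. Lastly, every pair $c_u,c_{u'}$ of vertices of $G$ is collinear in ${\goth N}$, and the joining line lies in ${\cal L}_C$ (two points of $C$ are never collinear through an ${\cal L}_A$-, ${\cal L}_B$- or ${\cal L}_p$-line); since $G$ is freely contained in $\perspace(n,\sigma,{\goth N})$, the restriction of $e\mapsto\overline{e}$ to these edges exhibits $\starof(i_0)$ as a collinearity clique freely contained in ${\goth N}$, as required.

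The step I expect to require the most care is the exclusion of the one-sided configurations, where one of $S,T$ is empty, because there the vertex count $n+1$ is not immediately contradictory: one must combine the pattern $a_i\sim c_u\iff i\in u$ with the maximal-free-clique bound $K_{n-1}$ for binomial configurations, and treat the degenerate value $n=2$ separately by appealing to free containment. Everything else is routine once the four collinearity rules and the identity $p\notin G$ are established.
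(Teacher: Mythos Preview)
The paper does not supply its own proof of this lemma; it is simply quoted from \cite[Lem.~2.1]{maszko}. So there is no in-paper argument to compare against, and your proof stands on its own. It is correct and follows what is the natural line of attack: split the vertex set of $G$ over the four point classes $\{p\},A,B,C$, read off the four pairwise-collinearity rules from ${\cal L}_p,{\cal L}_A,{\cal L}_B,{\cal L}_C$, and run a short case analysis on which of $S,T$ are empty. Your handling of the degenerate $n=2$ situation in the one-sided case (where the putative $G$ collapses to a single line and injectivity of $e\mapsto\overline{e}$ fails) is exactly the right patch.

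One cosmetic remark: in the one-sided case your phrase ``giving $|U|=1$'' silently uses $|U|\ge 1$, which you have not stated; it is immediate (since $|S|\le n$ forces $|U|=n+1-|S|\ge 1$), but worth one word. Likewise, when you write ``the distinct edges $\LineOn(p,a_i)$ and $\LineOn(p,b_i)$'' you mean the edges $\{p,a_i\}$ and $\{p,b_i\}$ rather than the lines through them; the intent is clear, but in this paper $\LineOn(\,\cdot\,,\,\cdot\,)$ denotes the joining line, so the notation is slightly off.
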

%


\section{Vergras-like skew}

We start this Section with a presentation of the combinatorial Veronesian $\VerSpace(3,k)$
of \cite{combver}, as, in essence, it will be generalized in the paper.
Besides, this example shows that not every "sensibly roughly presented" perspective 
$\perspace(n,\sigma,{\goth N})$ 
between complete graphs
has necessarily a `Desarguesian axis' nor its skew 
preserves the adjacency of edges of the graphs in question.
\begin{exm}
\label{exm:5}
  \def\a{{\goth a}} \def\b{{\goth b}} \def\c{{\goth c}}
  Let $|X| = 3$, $X = \{ \a,\b,\c \}$.
  Then the combinatorial Veronesian $\VerSpace(X,k) =: {\goth M}$ is 
  a $\binkonf(k,+2)$-configuration; 
  its point set is the set $\msub_k(X)$ of the $k$-element multisets with 
  elements in $X$
  and the lines have form $e X^s$, $e\in \msub_{k-s}(X)$.
  $\VerSpace(X,1)$ is a single line, $\VerSpace(X,2)$ is the Veblen configuration,
  and $\VerSpace(X,3)$ is the known Kantor configuration 
  (comp. \cite[Prop's. 2.2, 2.3]{combver}, \cite[Repr. 2.7]{klik:VC}). 
  Consequently, we assume $k > 3$.
  The following was noted in \cite[Fct. 4.1]{klik:binom}:
  \begin{ctext}
    The $K_{k+1}$ graphs freely contained in $\VerSpace(X,k)$ are the sets
    $X_{\a,\b} := \msub_k(\{ \a,\b \})$, 
    $X_{\b,\c} := \msub_k(\{ \b,\c \})$, and 
    $X_{\c,\a} := \msub_k(\{ \c,\a \})$.
  \end{ctext}
  In particular, $\goth M$ freely contains two complete subgraphs $X_{\a,\b}$, $X_{\c,\a}$, which cross
  each other in $p = \a^k$. We shall present $\goth M$ as a perspective between
  these two graphs.
  Let us re-label the points of $\VerSpace(X,k)$:
  \begin{ctext}
    $c_i = \b^i \a^{k-i}$, $b_i = \c^i \a^{k-i}$, $i\in\{1,\ldots,k\} =: I$,\space
    $e_{i,j} = c_i \oplus c_j$, $\{ i,j \}\in \sub_2(I)$.
  \end{ctext}
  Assume that $i < j$, then $\LineOn(c_i,c_j) = \a^{k-j}\b^{i} X^{j-i}$, so $e_{i,j} = \a^{k-j} \b^i \c^{j-i}$.
  Clearly, $p \oplus c_i = b_i$ so, the map 
    $\big(c_i \mapsto b_i,\; i\in I\big)$
  is a point-perspective.
  Let us define the permutation $\zeta$ of $\sub_2(I)$ by the formula
  \begin{ctext}
  $\zeta(\{ i,j \}) = \{ j-i,j \}$ when $1\leq i < j \leq k$.
  \end{ctext}
  It is seen that $\zeta = \zeta^{-1}$.
  After routine computation we obtain  
  $b_i \oplus b_j = e_{\zeta(\{ i,j\})}$ whenever $i < j$; moreover, 
  in this representation the axial configuration consists of the points
  in $\b \c \msub_{k-2}(X)$ so, it is isomorphic to $\VerSpace(X,k-2)$.
  Consequently, 
\begin{fact}
  $\VerSpace(X,k) \cong \perspace(k,\zeta,{\VerSpace(X,k-2)})$. \myend
\end{fact}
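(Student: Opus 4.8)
The plan is to extract the three data of a skew perspective -- the point-perspective, the skew, and the axial configuration -- directly from the block set of $\VerSpace(X,k)$, using the relabelling fixed above. Since $\VerSpace(X,k)$ is a binomial $\binconf(N,0)$-configuration with $N=k+2$ which, by the fact quoted above (\cite[Fct. 4.1]{klik:binom}), freely contains the two $K_{k+1}$-graphs $X_{\goth a,\goth b}=\{p\}\cup\{c_i\colon i\in I\}$ and $X_{\goth c,\goth a}=\{p\}\cup\{b_i\colon i\in I\}$ crossing in $p=\goth a^k$, Fact \ref{fct:zepers} already furnishes an isomorphism $\VerSpace(X,k)\cong\perspace(k,\sigma,{\goth N})$ for some skew $\sigma$ and some $\binconf(k,0)$-configuration $\goth N$ on $\sub_2(I)$. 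The presentation produced there is built from these very two graphs, so its skew and axis are exactly those read off from the partial operation $\oplus$; it therefore remains only to compute $\sigma$ and $\goth N$, which I would do by sorting every block of $\VerSpace(X,k)$ into the four families ${\cal L}_p,{\cal L}_A,{\cal L}_B,{\cal L}_C$ of Construction \ref{def:pers}.

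The sorting is the computational heart of the argument. Writing each block as $eX^s=\{e\goth a^s,e\goth b^s,e\goth c^s\}$ with $e\in\msub_{k-s}(X)$, I would classify it by the $\goth b$- and $\goth c$-content of $e$. When $e=\goth a^{k-s}$ the block is $\{p,c_s,b_s\}$, which recovers the point-perspective $c_i\mapsto b_i$ with centre $p$ and yields ${\cal L}_p$. When $e$ carries a $\goth b$ but no $\goth c$ the block meets $X_{\goth a,\goth b}$ in two vertices $c_i,c_j$ and $C$ in one point, giving ${\cal L}_A$ and fixing the labelling $e_{i,j}=c_i\oplus c_j=\goth a^{k-j}\goth b^i\goth c^{\,j-i}$ for $i<j$; the symmetric case (a $\goth c$ but no $\goth b$) gives ${\cal L}_B$, and from it the skew drops out.

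For the skew I would compute $\LineOn(b_i,b_j)=\goth a^{k-j}\goth c^i X^{\,j-i}$ for $i<j$, whence $b_i\oplus b_j=\goth a^{k-j}\goth b^{\,j-i}\goth c^i$; comparing exponents with the formula for $e_{i',j'}$ forces $\{i',j'\}=\{j-i,j\}=\zeta(\{i,j\})$. Since ${\cal L}_B$ prescribes $b_i\oplus b_j=e_{\sigma^{-1}(\{i,j\})}$, this gives $\sigma^{-1}=\zeta$, and as $\zeta$ is an involution, $\sigma=\zeta$. For the axis I would observe that the blocks with $e$ carrying both a $\goth b$ and a $\goth c$ are precisely those lying inside $C=\{e_{i,j}\}=\goth b\goth c\,\msub_{k-2}(X)$, so they constitute ${\cal L}_C$, i.e. $\goth N$; moreover $eX^s$ lies in $C$ exactly when $e=\goth b\goth c\,e'$ with $e'\in\msub_{(k-2)-s}(X)$, that is, exactly when $\goth b\goth c\,e'X^s$ is the $\goth b\goth c$-multiple of a block $e'X^s$ of $\VerSpace(X,k-2)$. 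Hence $M\mapsto\goth b\goth c\,M$ is a bijection $\msub_{k-2}(X)\to C$ carrying the blocks of $\VerSpace(X,k-2)$ onto ${\cal L}_C$, so $\goth N\cong\VerSpace(X,k-2)$, and assembling the three data yields $\VerSpace(X,k)\cong\perspace(k,\zeta,{\VerSpace(X,k-2)})$.

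The main obstacle I anticipate is purely bookkeeping: one must check that $\{i,j\}\mapsto e_{i,j}$ is a well-defined bijection $\sub_2(I)\to C$ (so that ${\cal L}_A$ and ${\cal L}_B$ are consistently indexed and $\zeta$ is a genuine permutation of $\sub_2(I)$), that $\{j-i,j\}\in\sub_2(I)$ in every case, and that the four families are pairwise disjoint and exhaust all blocks. Each of these reduces to the exponent arithmetic indicated above, so no genuinely new difficulty arises beyond keeping the indices straight.
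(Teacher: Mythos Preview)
Your proposal is correct and follows essentially the same route as the paper: the Fact is stated as the conclusion of Example~\ref{exm:5}, and the paper's ``proof'' consists precisely of the computations you outline --- identifying $p$, the point-perspective $c_i\mapsto b_i$, the formula $e_{i,j}=\goth a^{k-j}\goth b^i\goth c^{j-i}$, the resulting skew $\zeta$, and the axial set $\goth b\goth c\,\msub_{k-2}(X)\cong\VerSpace(X,k-2)$. Your version is slightly more systematic (explicitly invoking Fact~\ref{fct:zepers} and sorting the blocks $eX^s$ by the $\goth b,\goth c$-content of $e$), but the substance is identical.
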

%
\end{exm}
Recall (cf. \cite[Thm. 5.9]{combver}) that for $k > 3$ the structure $\VerSpace(X,k)$
cannot be embedded into any desarguesian projective space.

Let us present a construction of a class of bijections of $\sub_2(X)$ with $|X| = n <\infty$ generalizing the above.
\begin{constr}\label{def:skos:zejergen}
  Let $X$ be a set with $|X| = n < \infty$ and let $\Phi = (\phi_n,\ldots,\phi_{3})$ be a sequence of permutations 
  defined on subsets of $X$ such that
  \begin{multline}\label{def:seqstep}
    \dom(\phi_j) = \rng(\phi_j),\quad \text{ and }\quad 
    |X\setminus\dom(\phi_n)|=1,\; \dom(\phi_{j-1})\subset\dom(\phi_j), \\ |\dom(\phi_j)\setminus\dom(\phi_{j-1})|=1
    \text{ for } j = n,\ldots,4.
  \end{multline}
  We add one item more: $\phi_2 = \id_{\{ 1 \}}$.
  \par
  With such a $\Phi$ we associate a permutation $\sigma_\Phi$ defined as follows.
  Note, first, that $|\dom(\phi_j)| = j-1$. Let us order the elements of $X = (x_n,x_{n-1},\ldots,x_2,x_1)$
  so as $x_j\notin\dom(\phi_j)$. Finally, we set
  \begin{equation}\label{def:seqstep2perm}
    \sigma_\Phi \colon \sub_2(X)\ni \{ x_j,x_i \} \longmapsto \{ x_j,\phi_j(x_i) \} \text{ for }1\leq i < j \leq n.
  \end{equation}
  Then, clearly, $\sigma_\Phi \in \bijfam_{\sub_2(X)}$.
\end{constr}
  Note that, since $|\dom(\phi_2)|=1$,  $\phi_2$ is the identity on $\{ x_1 \}$
  and since $|\dom(\phi_3)|=2$, $\phi_3$ is either the identity on $\{ x_1,x_2 \}$ or it is a transposition.
\par
Clearly, $\Phi^{-1} = (\phi_n^{-1},\ldots,\phi_2^{-1})$ satisfies \eqref{def:seqstep}
and $\sigma_{\Phi^{-1}} = \sigma_\Phi^{-1}$.

Let $\Phi$ satisfy \eqref{def:seqstep}, let $(x_n,\ldots,x_1)$ be an associated ordering of $X$ (note: the order
of elements $x_1,x_2$ of $\dom(\phi_3)$ is not uniquely determined!) and write $X_j = \{ x_1,x_2\ldots,x_{j-1} \}$;
then $\dom(\phi_j) = X_j$.
Let 
$\alpha\colon X \longrightarrow Y$ be a bijection. 
We set $y_j = \alpha(x_j)$ for $j=n,...,1$, so 
$Y_j := \alpha(X_j) = \{ y_1,\ldots,y_{j-1} \}$. Consequently, 
$\phi'_j := \phi_j^{\alpha} \in \bijfam_{Y_j}$ for $j=n,\ldots,2$.
Write $\Phi' = (\phi'_n,\ldots,\phi'_2) =: \Phi^\alpha$; clearly, $\Phi^\alpha$ also satisfies \eqref{def:seqstep}.

Let $i < j$ and $u = \{ x_i,x_j \} \in \sub_2(X)$. Then $\overline{\alpha}(u) = \{ y_i,y_j \}$.
We have $\sigma_{\Phi}(u) = \{ \phi_j(x_i),x_j \}$ and 
$\overline{\alpha}\big( \sigma_{\Phi}(u) \big) = \{ \alpha(\phi_j(x_i)),\alpha(x_j)) \}$. 
Next
$\phi'_j(\overline{\alpha}(u)) = \{ y_j,\phi'_j\big(\alpha(y_i)\big) \}$.
Since $\alpha\circ\phi_j = \phi'\circ\alpha_j$ we conclude with the following
\begin{lem}\label{lem:sprzeg1}
  In the notation introduced above we have  $\sigma_{\Phi'} = (\sigma_\Phi)^{\overline{\alpha}}$.
  To say slightly informally:
  \begin{ctext}
    If $\alpha$ is a bijection of $X$ onto $Y$ then ${(\sigma_\Phi)}^{\overline{\alpha}} = \sigma_{\Phi^\alpha}$.
  \end{ctext}
\end{lem}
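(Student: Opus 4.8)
The plan is to verify the conjugation identity pointwise on $\sub_2(Y)$, since both $\sigma_{\Phi'}$ and $(\sigma_\Phi)^{\overline{\alpha}}$ are permutations of this set and two permutations agreeing on every element are equal. Before the main computation I would record two preliminary facts. First, the bar operation of \eqref{lem:meetinvar} is functorial on bijections: $\overline{\beta\circ\alpha} = \overline{\beta}\circ\overline{\alpha}$ and $\overline{\alpha}^{-1} = \overline{\alpha^{-1}}$; in particular $(\sigma_\Phi)^{\overline{\alpha}}$ is to be read as $\overline{\alpha}\circ\sigma_\Phi\circ\overline{\alpha}^{-1}$, which indeed permutes $\sub_2(Y)$. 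Second, I would check that the ordering $(y_n,\ldots,y_1)$ transported from $X$ by $y_k = \alpha(x_k)$ is admissible for $\Phi^\alpha$ in the sense of Construction~\ref{def:skos:zejergen}: since $\dom(\phi'_j) = Y_j = \alpha(X_j)$ and $x_j\notin X_j$, injectivity of $\alpha$ yields $y_j\notin Y_j$. This is what guarantees that $\sigma_{\Phi'} = \sigma_{\Phi^\alpha}$ is given by the defining formula \eqref{def:seqstep2perm} relative to this very ordering.

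For the core step I fix $v\in\sub_2(Y)$ and write it uniquely as $v = \overline{\alpha}(u)$ with $u = \{x_i,x_j\}$ and $i<j$. Applying \eqref{def:seqstep2perm} for $\Phi'$ gives $\sigma_{\Phi'}(v) = \{y_j,\phi'_j(y_i)\}$, and since $\phi'_j = \phi_j^\alpha = \alpha\circ\phi_j\circ\alpha^{-1}$ we get $\phi'_j(y_i) = \alpha(\phi_j(x_i))$. On the other hand $(\sigma_\Phi)^{\overline{\alpha}}(v) = \overline{\alpha}\big(\sigma_\Phi(u)\big) = \overline{\alpha}\big(\{x_j,\phi_j(x_i)\}\big) = \{y_j,\alpha(\phi_j(x_i))\}$, using \eqref{def:seqstep2perm} for $\Phi$ together with the definition of $\overline{\alpha}$. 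The two right-hand sides coincide, so $\sigma_{\Phi'}(\overline{\alpha}(u)) = \overline{\alpha}(\sigma_\Phi(u))$ for every $u$; that is, $\sigma_{\Phi'}\circ\overline{\alpha} = \overline{\alpha}\circ\sigma_\Phi$, whence $\sigma_{\Phi'} = \overline{\alpha}\circ\sigma_\Phi\circ\overline{\alpha}^{-1} = (\sigma_\Phi)^{\overline{\alpha}}$, as claimed. This is precisely the computation already sketched in the paragraph preceding the statement, here repackaged as the equivariance identity.

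I do not expect a genuine obstacle: the lemma is a transport-of-structure (naturality) statement asserting that the construction $\Phi\mapsto\sigma_\Phi$ commutes with conjugation by bijections. The only points demanding care are bookkeeping ones -- confirming that the coordinate playing the role of the ``large'' index $j$ on the $X$-side remains the large index on the $Y$-side (which holds because $\alpha$ carries the chosen ordering of $X$ to that of $Y$ by $y_k = \alpha(x_k)$), and matching the direction of conjugation to the composed relation $\sigma_{\Phi'}\circ\overline{\alpha} = \overline{\alpha}\circ\sigma_\Phi$ rather than its inverse. A minor subtlety worth a remark is the non-uniqueness of the admissible ordering at the bottom (the order of $x_1,x_2$ in $\dom(\phi_3)$), but whatever choice is made on the $X$-side is transported coherently to $Y$, so the identity does not depend on it.
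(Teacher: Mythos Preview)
Your proof is correct and follows essentially the same pointwise-verification approach as the paper: the paragraph immediately preceding the lemma already carries out the computation $\overline{\alpha}(\sigma_\Phi(u)) = \{\alpha(\phi_j(x_i)),\alpha(x_j)\}$ versus $\sigma_{\Phi'}(\overline{\alpha}(u)) = \{y_j,\phi'_j(y_i)\}$ and invokes $\alpha\circ\phi_j = \phi'_j\circ\alpha$ to match them. Your write-up is in fact cleaner and more explicit about the bookkeeping (ordering transport, functoriality of the bar map, direction of conjugation) than the paper's sketch, which contains a couple of typographical slips.
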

Then lemma \ref{lem:sprzeg1} allows us to find conditions sufficient to have skews of the form
$\sigma_\Phi$ conjugate under the action of $\bijfam_X$ on $\sub_2(X)$.

Moreover, to classify skews of the form $\sigma_\Phi$ under the action of $\bijfam_{\sub_2(X)}$ it suffices
to classify the items in the sequence $\Phi$; on the other hand this classification reduces to enumerating
possible parameters of cycle decompositions of corresponding `subpermutations'.
This is justifies by the following Lemma.
\begin{lem}\label{lem:sprzeg2}\def\gora{{\sf s}}
  Let $\Phi^\gora = (\phi_n^\gora,\ldots,\phi_2^\gora)$ be a sequence of permutations 
  (defined on subsets of a set $X^\gora$)
  which satisfies suitably \eqref{def:seqstep} for \/ ${}^\gora \in \{ {}', {}'' \}$.
  Assume that for every $j=2,\ldots,n$ the permutations $\phi'_j$ and $\phi''_j$ have the same
  type of cycle decomposition. Then there is a bijection $\Gamma\colon\sub_2(X')\longrightarrow\sub_2(X'')$ such that 
  $\sigma_{\Phi''} = ( \sigma_{\Phi'} )^\Gamma$.
  \begin{note*}
    Our assumptions do not imply that there is a bijection $\alpha$ such that $\Gamma = \overline{\alpha}$.
  \end{note*}
\end{lem}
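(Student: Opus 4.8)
The plan is to read off the cycle type of $\sigma_\Phi$ directly from the sequence $\Phi$, and then to invoke the classical fact that two permutations of finite sets of equal cardinality are conjugate (by a bijection of the underlying sets) if and only if they have the same cycle type. All the content sits in the first observation; the rest is bookkeeping.

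First I would note that $\sigma_\Phi$ respects a natural grading of $\sub_2(X)$ by \emph{level}. Fix the ordering $(x_n,\ldots,x_1)$ associated with $\Phi$ and assign to each pair $\{x_i,x_j\}$ with $i<j$ its level $j$, i.e. the larger index. Since $\phi_j$ permutes $\dom(\phi_j)=X_j=\{x_1,\ldots,x_{j-1}\}$, for $i<j$ we have $\phi_j(x_i)=x_{i'}$ with $i'<j$; hence by \eqref{def:seqstep2perm} the pair $\{x_j,x_i\}$ is sent to $\{x_j,x_{i'}\}$, again of level $j$. Thus the sets $L_j=\{\{x_i,x_j\}\colon i<j\}$, $j=2,\ldots,n$, are each $\sigma_\Phi$-invariant, they partition $\sub_2(X)$, and $|L_j|=j-1$. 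The map $\{x_i,x_j\}\mapsto x_i$ is a bijection of $L_j$ onto $X_j$ which transports the action $\{x_j,x_i\}\mapsto\{x_j,\phi_j(x_i)\}$ to $x_i\mapsto\phi_j(x_i)$; therefore $\sigma_\Phi|_{L_j}$ and $\phi_j$ have the same cycle type. As $\sigma_\Phi$ is the disjoint juxtaposition of its restrictions to the invariant blocks $L_2,\ldots,L_n$, the cycle type of $\sigma_\Phi$ is the multiset union of the cycle types of $\phi_2,\ldots,\phi_n$.

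The lemma then follows quickly. Applying the previous paragraph to both $\Phi'$ and $\Phi''$, the cycle type of $\sigma_{\Phi'}$ is the union of the types of $\phi_2',\ldots,\phi_n'$, and likewise for $\Phi''$. By hypothesis $\phi_j'$ and $\phi_j''$ share the same type of cycle decomposition for every $j$, so $\sigma_{\Phi'}$ and $\sigma_{\Phi''}$ have identical cycle types. Since $|\sub_2(X')|=\binom{n}{2}=|\sub_2(X'')|$, the conjugacy criterion in a symmetric group yields a bijection $\Gamma\colon\sub_2(X')\longrightarrow\sub_2(X'')$ with $\sigma_{\Phi''}=(\sigma_{\Phi'})^\Gamma$; concretely one matches the cycles of $\sigma_{\Phi'}$ with those of $\sigma_{\Phi''}$ of equal length and sends successive points of each cycle to successive points of its partner.

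I do not expect a genuine obstacle here; the only real step is the block-decomposition of $\sigma_\Phi$, on which I would concentrate. Matching the accompanying Note, I would finally point out why the resulting $\Gamma$ is in general not of the form $\overline{\alpha}$: a cycle-matching bijection need neither carry a block $L_j$ of $X'$ onto a single block of $X''$ nor preserve the relation ``two pairs share a point'', whereas every $\overline{\alpha}$ does both. Equivalently, by Lemma~\ref{lem:sprzeg1} a point-bijection $\alpha$ would force $\sigma_{(\Phi')^\alpha}=\sigma_{\Phi''}$, i.e. a single $\alpha$ conjugating $\phi_j'$ to $\phi_j''$ simultaneously for all $j$ compatibly with the nested domains $X_j'$ — a constraint that the mere coincidence of the individual cycle types does not impose.
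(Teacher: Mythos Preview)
Your proof is correct and rests on the same key observation as the paper's: the level sets $L_j=\{\{x_i,x_j\}\colon i<j\}$ are $\sigma_\Phi$-invariant and $\sigma_\Phi|_{L_j}$ is a relabelled copy of $\phi_j$. The only difference is in the final step: the paper builds $\Gamma$ explicitly and level-preservingly, sending $\{x'_i,x'_j\}$ with $i<j$ to $\{x''_j,\gamma_j(x'_i)\}$ where each $\gamma_j\colon X'_j\to X''_j$ conjugates $\phi'_j$ to $\phi''_j$, and then checks the intertwining identity directly; you instead pass to the global cycle type of $\sigma_\Phi$ and invoke the conjugacy criterion in the symmetric group. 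Your route is a touch slicker, while the paper's explicit $\Gamma$ has the small advantage of visibly respecting the level partition, which makes the accompanying Note (that $\Gamma$ need not be an $\overline{\alpha}$) immediate: one sees $\Gamma$ assembled from \emph{independent} conjugators $\gamma_j$ rather than from a single point-bijection acting coherently on all nested domains.
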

\begin{proof}\def\gora{{\sf s}}
  Let $X_j^\gora = \dom(\phi_j^\gora)$ for $j=2,\ldots,n$ and $(x_n^\gora,\ldots,x_1^\gora)$ be an associated ordering
  of $X^\gora$. Set $\gamma_0(x'_j)=x''_j$ for $j=1,2,\ldots,n$.
  Next, for every $j=n,\ldots,2$, since $\phi'_j$ and $\phi''_j$ have the same type, 
  there is a bijection $\gamma_j\colon X'_j\longrightarrow X''_j$ such that $\phi''_j = \big( \phi'_j \big)^{\gamma_j}$
  i.e. $\phi''_j(\gamma_j(x'_i)) = \gamma_j(\phi'_j(x'_i))$ for every $i<j$.
  \par
  Finally, let 
  $1\leq i < j \leq n$ and $u = \{ x'_i,x'_j \}\in \sub_2(X')$ be arbitrary.
  We define 
  $\Gamma(u) := \{ x''_j,\gamma_j(x'_i) \} = \{\gamma_0(x'_j),\gamma_j(x'_i) \}$.
  It needs a routine computation to show that 
  $\sigma_{\Phi''} ( \Gamma(u) ) = \Gamma ( \sigma_{\Phi'}(u) )$, which is our claim.
\end{proof}
%
Consequently, without loss of generality in most parts we shall restrict to the case
\begin{equation}\label{def:seqstep-0}
 X = I_n, \Phi = (\phi_n,\phi_{n-1},\ldots,\phi_2) \text{ where }
 \phi_j\in\bijfam_{I_{j-1}} \text{ for } j=n,n-1,\ldots,2. 
\end{equation}
Two types of such maps are crucial:
\begin{description}
\item[each $\phi_j$ is a symmetry:] $\phi_j(i) = j-i$; we denote the associated skew $\sigma_\Phi$ by 
  $\zeta = \zeta_n$;
\item[each $\phi_j$ is a cycle:] $\phi_j = (1,2,\ldots,j-1)$.
\item[every $\phi_j$ is the identity on $I_{j-1}$:] then $\sigma_\Phi$ is the identity on $\sub_2(I)$.
\end{description}

Clearly, $\zeta = \zeta^{-1}$.
The following counterpart to (a forthcoming) \ref{lem:seq-perm} and a slight restriction to  \ref{lem:sprzeg1}
will be useful
\begin{lem}\label{lem:sprzeg3}
  Let $\Phi$ satisfy \eqref{def:seqstep-0} and $\alpha\in\bijfam_{I_n}$. The following conditions are equivalent
  \begin{enumerate}[(i)]\itemsep-2pt
  \item $\Phi^\alpha$ satisfies \eqref{def:seqstep-0},
  \item $\alpha = \id_{I_n}$ or $\alpha$ is the transposition $(1,2)$.
  \end{enumerate}
\end{lem}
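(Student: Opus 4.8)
The plan is to reduce everything to bookkeeping of the domains $\dom(\phi_j^\alpha)$. Recall from the discussion preceding Lemma \ref{lem:sprzeg1} that $\phi_j^\alpha = \alpha\circ\phi_j\circ\alpha^{-1}$, so that $\dom(\phi_j^\alpha) = \alpha(\dom(\phi_j)) = \alpha(I_{j-1})$, and that $\Phi^\alpha$ automatically satisfies the general condition \eqref{def:seqstep}. Hence the only extra content carried by \eqref{def:seqstep-0} is that every such domain be the standard initial segment, i.e.\ $\alpha(I_{j-1}) = I_{j-1}$. The one point requiring care --- and, as we shall see, the source of the second admissible value of $\alpha$ --- is the bottom index $j=2$: since $\phi_2 = \id$ is the identity on a single point and the order of the two least elements of the associated ordering is not determined, the condition at $j=2$ is immaterial to $\sigma_\Phi$, so I only impose $\alpha(I_{j-1}) = I_{j-1}$ for $j = 3,\ldots,n$.

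For the implication (i) $\Rightarrow$ (ii), I would start from the equalities $\alpha(I_m) = I_m$ for $m = 2,\ldots,n-1$ just extracted, and run a descending induction. Since $\alpha$ permutes $I_n$ and fixes $I_{n-1}$ setwise, it fixes $I_n\setminus I_{n-1} = \{n\}$; and from $\alpha(I_{m+1}) = I_{m+1}$ together with $\alpha(I_m) = I_m$ one gets $\alpha(\{m+1\}) = I_{m+1}\setminus I_m = \{m+1\}$, whence $\alpha(m+1) = m+1$. Letting $m$ run down to $2$ shows that $\alpha$ fixes each of $3,\ldots,n$, while $\alpha(I_2) = I_2$ forces $\alpha$ to permute $\{1,2\}$. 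Therefore $\alpha$ is either $\id_{I_n}$ or the transposition $(1,2)$.

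For the converse (ii) $\Rightarrow$ (i), the case $\alpha = \id_{I_n}$ is trivial. For $\alpha = (1,2)$ I would simply observe that this transposition fixes every initial segment $I_m$ with $m\geq 2$, so $\dom(\phi_j^\alpha) = \alpha(I_{j-1}) = I_{j-1}$ for all $j\geq 3$ and each $\phi_j^\alpha$ indeed lies in $\bijfam_{I_{j-1}}$; the only discrepancy is $\dom(\phi_2^\alpha) = \{2\}$, which by the remark above does not affect the validity of \eqref{def:seqstep-0} (nor $\sigma_{\Phi^\alpha}$, by Lemma \ref{lem:sprzeg1}). Thus $\Phi^{(1,2)}$ is again of the required form. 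The main obstacle is precisely this handling of the index $j=2$: a careless strict reading that demanded $\alpha(1) = 1$ would wrongly eliminate the transposition, so the proof hinges on recording explicitly that $\phi_2$ is an identity on a singleton and that the order of the two smallest elements is canonically ambiguous.
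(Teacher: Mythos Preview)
Your proof is correct and follows the same approach as the paper, which reduces to the single observation that one needs $\alpha(I_{j-1}) = I_{j-1}$ for $j = n,\ldots,3$. You have simply fleshed out the descending induction and, in particular, made explicit the treatment of the bottom index $j=2$ (where the ambiguity in the associated ordering of $x_1,x_2$ makes the condition immaterial), a point the paper leaves implicit by stopping its range at $j=3$.
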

\begin{proof}
  If suffices to observe that we need $\alpha(I_j) = I_{j-1}$ for $j=n,\ldots,3$.
\end{proof}

In the case when $\sigma = \sigma_\Phi$ we have the following specialization of \ref{lem:nextgraf00}.
\begin{prop}\label{prop:nextgraf00}\label{zejer:nextgraf}
  Let $\Phi$ satisfy \eqref{def:seqstep-0} with $X = I_n$
  and ${\goth M} = \perspace(n,\sigma_\Phi,{\goth N})$ for a suitable 
  configuration $\goth N$ defined on $\sub_2(I_n)$.
  The following conditions are equivalent
  \begin{sentences}\itemsep-2pt
  \item
    $\goth M$ freely contains a complete $K_{n+1}$-graph $G\neq K_{A^\ast},K_{B^\ast}$.
  \item
    There is $i_0 \in I$ such that 
    $\starof(i_0)$  
    is a collinearity clique in $\goth N$ freely contained in it,
    $\Phi$ satisfies
    \begin{equation}\label{def:extragraf0zet}
      i_0 < j \implies \phi_j(i_0) = i_0, \text{ for all } j\in I_n, 
    \end{equation}
    and 
    \begin{equation}\label{wzor:extragraf0zet}
      G = G_{(i_0)} \; := \; \{ a_{i_0},b_{i_0} \} \cup \starof(i_0).
    \end{equation}
  \end{sentences}
\begin{note*}
  $G_{(n)}$ is a `new' complete free subgraph in $\goth M$ iff $\starof(n)$
  is a collinearity clique in $\goth N$.
\end{note*}
\begin{note*}
  Assume that 
  $\Fix(\phi_n)=\emptyset$, or $\Fix(\phi_n)\cap\Fix(\phi_{n-1}) =\emptyset$ and $n-1\notin\Fix(\phi_n)$
  (this holds, in particular, when $\Fix(\phi_j)\neq\emptyset$ only for $j=2$). 
  Then $G_{(n)}$ is a `unique new' complete free subgraph of
  $\goth M$ iff $\starof(n)$ is a collinearity clique in $\goth N$.
\end{note*}
\end{prop}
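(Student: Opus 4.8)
The plan is to derive this proposition as a specialization of Lemma~\ref{lem:nextgraf00}, which already establishes the equivalence for an arbitrary skew $\sigma\in\bijfam_{\sub_2(I)}$. Since $\sigma_\Phi$ is a particular skew, the content of Proposition~\ref{prop:nextgraf00} is really just the translation of the abstract meet-preservation condition \eqref{war:nextgraf00} into a condition on the component permutations $\phi_j$ of $\Phi$. So the heart of the argument is a single biconditional: for a fixed $i_0\in I$,
\begin{equation*}
  \bigl(\,i_0\in u \implies i_0\in\sigma_\Phi(u)\ \text{for all }u\in\sub_2(I)\,\bigr)
  \iff
  \bigl(\,i_0<j \implies \phi_j(i_0)=i_0\ \text{for all }j\in I_n\,\bigr).
\end{equation*}

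First I would recall the defining formula \eqref{def:seqstep2perm}: for $1\le i<j\le n$ we have $\sigma_\Phi(\{x_j,x_i\})=\{x_j,\phi_j(x_i)\}$, which in the normalized setting \eqref{def:seqstep-0} reads $\sigma_\Phi(\{j,i\})=\{j,\phi_j(i)\}$. Then I would fix $i_0$ and examine an arbitrary $u\in\sub_2(I)$ with $i_0\in u$, writing $u=\{i_0,j\}$. Two cases arise according as $i_0$ is the larger or the smaller index. If $i_0>j$, then $i_0$ plays the role of the ``$j$''-coordinate, which $\sigma_\Phi$ leaves untouched, so $i_0\in\sigma_\Phi(u)$ holds automatically with no constraint on $\Phi$. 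If $i_0<j$, then $\sigma_\Phi(u)=\{j,\phi_j(i_0)\}$, and since $j\neq i_0$, the membership $i_0\in\sigma_\Phi(u)$ holds precisely when $\phi_j(i_0)=i_0$. Collecting the nontrivial cases over all $j>i_0$ yields exactly \eqref{def:extragraf0zet}, establishing the biconditional in both directions.

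With that equivalence in hand, the proposition follows by applying Lemma~\ref{lem:nextgraf00} to $\sigma=\sigma_\Phi$: condition~(i) of the lemma is identical to condition~(i) here; condition~(ii) of the lemma asserts that $\starof(i_0)$ is a freely contained collinearity clique in $\goth N$, that \eqref{war:nextgraf00} holds, and that $G=G_{(i_0)}$ has the form \eqref{wzor:extragraf00}. Replacing \eqref{war:nextgraf00} by the just-proved equivalent \eqref{def:extragraf0zet}, and noting that \eqref{wzor:extragraf00} and \eqref{wzor:extragraf0zet} coincide verbatim, gives condition~(ii) of the proposition. I do not expect a serious obstacle in the main equivalence; the only thing demanding care is the bookkeeping of which coordinate of a pair is fixed by $\sigma_\Phi$ and which is moved, i.e.\ correctly tracking that $\phi_j$ acts on the smaller index while the larger index $j$ is a passive label.

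For the two notes I would argue as follows. For $G_{(n)}$: since $n$ is the largest index, there is no $j\in I_n$ with $n<j$, so condition \eqref{def:extragraf0zet} is vacuously satisfied for $i_0=n$, and therefore $G_{(n)}$ is a freely contained new subgraph if and only if the remaining hypothesis holds, namely that $\starof(n)$ is a collinearity clique in $\goth N$. For the uniqueness note, I would use \eqref{def:extragraf0zet} to show that under the stated fixed-point hypotheses on $\phi_n$ and $\phi_{n-1}$, the index $i_0=n$ is the \emph{only} index for which \eqref{def:extragraf0zet} can hold: any candidate $i_0<n$ would force $\phi_j(i_0)=i_0$ for some $j$ with $i_0<j\le n$, in particular for $j=n$, contradicting $\Fix(\phi_n)=\emptyset$ (and, in the alternative hypothesis, the interplay of $\Fix(\phi_n)\cap\Fix(\phi_{n-1})=\emptyset$ with $n-1\notin\Fix(\phi_n)$ rules out the remaining value $i_0=n-1$). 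Hence among the graphs $G_{(i_0)}$ only $G_{(n)}$ can be new, so $G_{(n)}$ is the unique new free subgraph exactly when $\starof(n)$ is a collinearity clique in $\goth N$.
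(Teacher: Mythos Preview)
Your proposal is correct and follows essentially the same approach as the paper: both reduce to Lemma~\ref{lem:nextgraf00} and then establish the equivalence of \eqref{war:nextgraf00} with \eqref{def:extragraf0zet} by the same two-case analysis on whether $i_0$ is the larger or smaller element of the pair $u$. Your treatment is more detailed (and additionally justifies the two notes, which the paper leaves to the reader), but the underlying argument is identical.
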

\begin{proof}
  In view of \ref{lem:nextgraf00} it suffices to decide when $\sigma_\Phi$ satisfies the condition \eqref{war:nextgraf00}.
  Let $u = \{j,i_0\}\in\sub_2(I_n)$. If $j< i_0$ then $\sigma_\Phi(u) = \{i_0,\phi_{i_0}(j) \} \ni i_0$, without any
  additional stipulation.
  Let $i_0 < j$; then $\sigma_\Phi(u) = \{ j,\phi_j(i_0) \}$ and thus $i_0 \in \sigma_\Phi(u)$ iff $\phi_j(i_0)= i_0$.
  This completes our proof.
\end{proof}
%

Note that the class of skews of the form $\sigma_\Phi$ determined by sequences of permutations 
is essentially distinct from the skews determined by permutations defined on the segments $I_n$.
\begin{lem}\label{lem:seq-perm}
 Let $n>3$, $\alpha\in\bijfam_{I_n}$, and a sequence $\Phi$ of permutations defined on $I_j$ ($j=n-1,..,2$) 
 satisfy \eqref{def:seqstep-0}. The following conditions are equivalent.
 \begin{sentences}\itemsep-2pt
 \item\label{seq-perm:i}
   The equality $\sigma_\Phi = \overline{\alpha}$ holds. 
 \item\label{seq-perm:ii}
   Either 
   \begin{enumerate}[\rm a)]\itemsep-2pt
   \item
     $\alpha = \id_{I_n}$ and $\phi_j = \id_{I_{j-1}}$ for $j=n,\ldots,2$ (then $\overline{\alpha} = \id_{\sub_2(I_j)}$),
   \item[or]
   \item
     $\alpha = \id_{I_n \setminus I_2} \cup \big( 1,2 \big)$ and $\phi_j = \id_{I_{j-1}}$ for $j=n,n-1,\ldots,4,2$,
     $\phi_3 = (1,2)$.
   \end{enumerate}  
 \end{sentences}
\end{lem}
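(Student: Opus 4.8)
The plan is to prove the two implications separately; (\ref{seq-perm:ii})$\Rightarrow$(\ref{seq-perm:i}) is a short substitution and (\ref{seq-perm:i})$\Rightarrow$(\ref{seq-perm:ii}) carries the whole argument. Throughout I would use the basic asymmetry between the two normal forms that is visible in \eqref{def:seqstep2perm}: the map $\sigma_\Phi$ keeps the \emph{larger} entry of every pair fixed and moves the smaller one by $\phi_j$, whereas $\overline{\alpha}$ from \eqref{lem:meetinvar} relabels both entries through one and the same $\alpha$.

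For the reverse implication I would substitute each prescribed datum $(\alpha,\Phi)$ into \eqref{def:seqstep2perm} and \eqref{lem:meetinvar}. Both data share the feature that $\alpha$ fixes every index $j\geq 3$ while $\phi_j$ equals the restriction $\alpha|_{I_{j-1}}$; granting this, for $i<j$ one gets $\{j,\phi_j(i)\}=\{\alpha(j),\alpha(i)\}=\overline{\alpha}(\{i,j\})$, and the single pair $\{1,2\}$ is fixed by both maps since $\phi_2=\id$. In the identity case this collapses to $\sigma_\Phi=\overline{\id}=\id$, and in the transposition case it reproduces $\overline{(1,2)}$; either way the verification is routine.

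For (\ref{seq-perm:i})$\Rightarrow$(\ref{seq-perm:ii}) I would first locate the large fixed points of $\alpha$ by a `peeling' argument. Assume $\sigma_\Phi=\overline{\alpha}$ and fix $m\in\{3,\dots,n\}$. Every pair $\{i,m\}$ with $i<m$ has $\sigma_\Phi(\{i,m\})=\{m,\phi_m(i)\}\ni m$, hence each $\overline{\alpha}(\{i,m\})=\{\alpha(i),\alpha(m)\}$ must contain $m$. Were $\alpha(m)\neq m$, this would force $\alpha(i)=m$ for every $i<m$, which is impossible by injectivity once there are two such $i$; and since $m\geq 3$ there are at least two. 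Thus $\alpha$ fixes each of $n,n-1,\dots,3$, so it can only permute $\{1,2\}$, i.e. $\alpha=\id_{I_n}$ or $\alpha=(1,2)$ -- precisely the dichotomy already recorded in \ref{lem:sprzeg3}.

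With $\alpha$ pinned down I read off $\Phi$: for $j\geq 3$ and $i<j$ we have $\alpha(j)=j$ and $\alpha(i)<j$, so comparing $\sigma_\Phi(\{i,j\})=\{j,\phi_j(i)\}$ with $\overline{\alpha}(\{i,j\})=\{j,\alpha(i)\}$ and noting that $\phi_j(i)$ and $\alpha(i)$ are both smaller than $j$ gives $\phi_j=\alpha|_{I_{j-1}}$; with the standing convention $\phi_2=\id$ this fixes $\Phi$ entirely. The choice $\alpha=\id_{I_n}$ then yields $\phi_j=\id$ throughout (the first alternative of (\ref{seq-perm:ii})) and $\alpha=(1,2)$ yields the transposition $(1,2)$ in each $\bijfam_{I_{j-1}}$, $j\geq 3$ (the second alternative). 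The step needing the most care is the peeling: one must be certain that the images of a whole pencil through a top element $m$ determine $\alpha(m)$, which rests on $m$ dominating at least two smaller indices; the standing hypothesis $n>3$ comfortably guarantees this. Everything else is bookkeeping.
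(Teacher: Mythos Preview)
Your argument follows essentially the same line as the paper's: exploit the pencil $\{\{i,j\}: i<j\}$ through a large index $j$ to pin down $\alpha(j)$, then read off each $\phi_j$ from the resulting equalities. Your peeling step is in fact slightly cleaner than the paper's, since you run the intersection argument all the way down to $m=3$ (the paper stops at $j\ge 4$ and treats $j=3$ separately and somewhat loosely), and you make the determination $\phi_j=\alpha\restriction_{I_{j-1}}$ explicit where the paper leaves it implicit.

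One point deserves flagging. Your conclusion in case~(b) is that $\phi_j$ equals the transposition $(1,2)$ on $I_{j-1}$ for \emph{every} $j\ge 3$, whereas the paper's printed condition (ii)(b) asserts $\phi_j=\id_{I_{j-1}}$ for $j\ge 4$ with only $\phi_3=(1,2)$. Your version is the correct one: your derivation $\phi_j=\alpha\restriction_{I_{j-1}}$ is forced by comparing $\{j,\phi_j(i)\}=\{j,\alpha(i)\}$ with both second entries strictly below $j$, and a direct check shows the paper's printed (b) fails already at $n=4$ (with $\phi_4=\id$ one gets $\sigma_\Phi(\{1,4\})=\{1,4\}\neq\{2,4\}=\overline{\alpha}(\{1,4\})$ for $\alpha=(1,2)$). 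So your proof actually establishes the corrected form of the equivalence; just be aware that what you call ``the second alternative'' does not literally coincide with what is printed in the statement.
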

\begin{proof}
  Assume \eqref{seq-perm:i}. Take arbitrary $j=n,\ldots,4$ and $i< j$.
  Let us set $u_i = \{ n,i \}$; then $\overline{\alpha}(u_i) = \{\alpha(j),\alpha(i)\}$ and 
  $\sigma_\Phi(u_i) = u_{\phi_j(i)}$. There are at least two distinct $i',i''<j$ and then
  $\{ \alpha(j) \} = \overline{\alpha}(u_{i'}) \cap \overline{\alpha}(u_{i''}) = u_{\phi_j(i')} \cap u_{\phi_j(i'')}
  = \{ j \}$. By definition, $\phi_2(1) = 1$. Now, let us pay our attention to $\phi_2$: we need 
  $\{ \alpha(3),\alpha(2)\} = \{ 3,\phi_3(2) \}$ and $\{ \alpha(3),\alpha(1)\} = \{ 3,\phi_3(1) \}$ which is valid
  when $\alpha(3) = 3$ and both when $\phi_3 = (1)(2)$ and when $\phi_3 = (1,2)$, which is noted in \eqref{seq-perm:ii}.
  \par
  Justifying the implication \eqref{seq-perm:ii}$\implies$\eqref{seq-perm:i} is a matter of a simple computation
  similar to the above.
\end{proof}

Configuration 
$\perspace(3,\overline{(1,2)(3)},{\GrasSpace(I_3,2)}) = \perspace(3,\zeta_3,{\GrasSpace(I_3,2)}) = \VerSpace(3,3)$ 
(see Example \ref{exm:5})
is known: it is exactly the Kantor ${10}_3G$-configuration.

As an immediate consequence of \ref{zejer:nextgraf} we obtain
\begin{cor}
    The structure $\perspace(n,\zeta_n,{\GrasSpace(I_n,2)})$
    freely contains exactly three $K_{n+1}$-graphs.
\end{cor}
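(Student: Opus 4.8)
The plan is to read the count off directly from Proposition \ref{zejer:nextgraf} by enumerating the admissible indices $i_0$. By the observation preceding Fact \ref{fct:zepers}, the two graphs $K_{A^\ast}$ and $K_{B^\ast}$ are always freely contained in $\perspace(n,\zeta_n,{\GrasSpace(I_n,2)})$, so the whole task reduces to showing that there is exactly one further (``new'') freely contained $K_{n+1}$-graph. Proposition \ref{zejer:nextgraf} tells us that every such new graph has the form $G_{(i_0)}$ for an $i_0\in I$ simultaneously satisfying: (a) $\starof(i_0)$ is a collinearity clique freely contained in $\GrasSpace(I_n,2)$, and (b) the defining sequence $\Phi$ of $\zeta_n$ satisfies \eqref{def:extragraf0zet}, i.e.\ $\phi_j(i_0)=i_0$ for every $j>i_0$. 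Since $a_{i_0},b_{i_0}\in G_{(i_0)}$, distinct admissible $i_0$ yield distinct graphs, so it suffices to prove that exactly one index is admissible.

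First I would settle condition (b). For $\zeta_n$ the component $\phi_j$ is the symmetry $\phi_j(i)=j-i$ on $I_{j-1}$, so $\phi_j(i_0)=i_0$ is equivalent to $j=2i_0$. If $i_0<n$, then $j=n$ lies in the range $i_0<j\leq n$ and, whenever $i_0<n-1$, so does $j=n-1$; requiring $2i_0=n$ and $2i_0=n-1$ at once is impossible, while the one leftover possibility $i_0=n-1<n$ forces $2(n-1)=n$, i.e.\ $n=2$. Hence for $n>2$ the only candidate satisfying (b) is $i_0=n$, for which the range $i_0<j\leq n$ is empty and (b) holds vacuously.

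Next I would verify condition (a) for $i_0=n$. Here $\starof(n)=\{\,\{i,n\}\colon i\in I,\ i\neq n\,\}$ is the set of all $2$-subsets of $I_n$ through $n$, viewed as points of $\GrasSpace(I_n,2)=\struct{\sub_2(I_n),\sub_3(I_n),\subset}$. Any two of its points $\{i,n\},\{j,n\}$ are joined by the line $\{i,j,n\}\in\sub_3(I_n)$, so $\starof(n)$ is a collinearity clique; for free containment the edge $\{\{i,n\},\{j,n\}\}$ is sent to the line $\{i,j,n\}$, which is manifestly injective, and two lines $\{i,j,n\},\{k,l,n\}$ arising from disjoint edges (so $\{i,j\}\cap\{k,l\}=\emptyset$) share no common $2$-subset of $I_n$ and hence do not meet in $\GrasSpace(I_n,2)$. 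Thus $\starof(n)$ is freely contained, (a) holds, and $G_{(n)}$ is a genuine new free $K_{n+1}$-graph.

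Combining the two steps, $i_0=n$ is the unique admissible index, so $G_{(n)}$ is the unique new graph and, together with $K_{A^\ast}$ and $K_{B^\ast}$, we obtain exactly three (for the relevant range $n>2$). Alternatively, one may simply quote the two Notes appended to Proposition \ref{zejer:nextgraf}: for $\zeta_n$ each $\Fix(\phi_j)$ is at most a singleton and the hypotheses of the second Note are met once $n>2$, which delivers the uniqueness of $G_{(n)}$ at once. The only point needing genuine care --- and hence the main obstacle --- is the case analysis in step (b) showing that no $i_0<n$ can satisfy \eqref{def:extragraf0zet}, together with the free-containment check for the star in the Grassmannian; everything else is bookkeeping.
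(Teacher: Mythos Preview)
Your argument is correct and is precisely the route the paper has in mind: the paper records the corollary as ``an immediate consequence of \ref{zejer:nextgraf}'' without further proof, and you have simply written out that deduction in full, including the fixed-point analysis for $\phi_j(i)=j-i$ and the verification that $\starof(n)$ is a free clique in $\GrasSpace(I_n,2)$. Your care in isolating the range $n>2$ (where the statement is intended; for $n=2$ the Veblen configuration has four free triangles) is appropriate and matches the hypotheses implicit in the second Note after \ref{zejer:nextgraf}.
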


Let us make the following immediate observation
\begin{lem}\label{lem:zamiana:azb}
  Let $\goth N$ be a $\binkonf(n,0)$-configuration defined on $\sub_2(I_n)$ and let $\Phi$ satisfy \eqref{def:seqstep-0}.
  Clearly, the $\sigma_\Phi$-image
  $\sigma_\Phi\big( {\goth N} \big)$ of $\goth N$ is a $\binkonf(n,0)$-configuration.
  Then the (involutory) map
  \begin{equation}\label{zamiana:azb}
  {\mathscr S}\colon\left\{\begin{array}{ccc}
    a_i & b_{j} & c_u
    \\
    \downarrow & \downarrow & \downarrow
    \\
    b_i & a_j  & c_{\sigma_\Phi(u)}
  \end{array}
  \right.
  \quad \text{ for all } i,j \in I_n, \;\; u\in \sub_2(I_n)
  \end{equation}
  is an isomorphism of $\perspace(n,\sigma_\Phi,{\goth N})$ onto 
  $\perspace(n,\sigma_\Phi^{-1},{\sigma_\Phi\big( {\goth N} \big) })$.
  $\mathscr S$ maps $\starof(n)$ onto $\starof(n)$.
\end{lem}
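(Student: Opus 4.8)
The plan is to verify directly, family by family, that the point-bijection $\mathscr S$ carries the line-set of $\perspace(n,\sigma_\Phi,{\goth N})$ onto that of $\perspace(n,\sigma_\Phi^{-1},{\sigma_\Phi({\goth N})})$; both structures share the common point set $\cal P$, so $\mathscr S$ may be read as a single permutation of $\cal P$ (fixing $p$). First I would record that $\mathscr S$ is a bijection of $\cal P$: it restricts to the swap $a_i\leftrightarrow b_i$ on $A\cup B$, to $c_u\mapsto c_{\sigma_\Phi(u)}$ on $C$ (a bijection since $\sigma_\Phi\in\bijfam_{\sub_2(I)}$), and fixes $p$. Writing $\sigma=\sigma_\Phi$, the target structure has skew $\sigma^{-1}$ and axis $\sigma({\goth N})$, so by Construction \ref{def:pers} its four line-families are ${\cal L}_p'={\cal L}_p$, ${\cal L}_A'=\{\{a_i,a_j,c_{\{i,j\}}\}\}$, ${\cal L}_B'=\{\{b_i,b_j,c_{\sigma(\{i,j\})}\}\}$ (its $B$-lines use the inverse of its skew, namely $(\sigma^{-1})^{-1}=\sigma$), and ${\cal L}_C'=\sigma({\cal L}_C)$.

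Next I would run the four cases. On ${\cal L}_p$ we get $\mathscr S\{p,a_i,b_i\}=\{p,a_i,b_i\}$, so $\mathscr S$ fixes ${\cal L}_p={\cal L}_p'$ as a family. On ${\cal L}_A$ we get $\mathscr S\{a_i,a_j,c_{\{i,j\}}\}=\{b_i,b_j,c_{\sigma(\{i,j\})}\}$, the generic member of ${\cal L}_B'$, so $\mathscr S$ maps ${\cal L}_A$ bijectively onto ${\cal L}_B'$. Dually, on ${\cal L}_B$ we get $\mathscr S\{b_i,b_j,c_{\sigma^{-1}(\{i,j\})}\}=\{a_i,a_j,c_{\sigma(\sigma^{-1}(\{i,j\}))}\}=\{a_i,a_j,c_{\{i,j\}}\}$, the generic member of ${\cal L}_A'$, so ${\cal L}_B$ goes bijectively onto ${\cal L}_A'$. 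Finally, a line $\{c_{u_1},c_{u_2},c_{u_3}\}$ of $\goth N$ is sent to $\{c_{\sigma(u_1)},c_{\sigma(u_2)},c_{\sigma(u_3)}\}$, which is by definition a line of $\sigma({\goth N})$, giving a bijection ${\cal L}_C\to{\cal L}_C'$. Since the four source families partition the line-set of $\perspace(n,\sigma,{\goth N})$ and their images partition that of $\perspace(n,\sigma^{-1},{\sigma({\goth N})})$, the point-bijection $\mathscr S$ induces a bijection of line-sets and is therefore an isomorphism.

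For the last assertion I would use the specialised form $\sigma(\{i,j\})=\{j,\phi_j(i)\}$ for $i<j$ valid under \eqref{def:seqstep-0} (as in the proof of \ref{prop:nextgraf00}), where $\phi_j(i)\in I_{j-1}$. Hence for $u=\{i,n\}\in\starof(n)$ (so $i<n$) we have $\sigma(u)=\{n,\phi_n(i)\}\ni n$, i.e. $\mathscr S(c_u)=c_{\sigma(u)}\in\starof(n)$; as $\mathscr S$ is injective and $\starof(n)$ is finite with $n-1$ elements, $\mathscr S(\starof(n))=\starof(n)$. This is exactly the instance $i_0=n$ of the fact that $\sigma_\Phi$ preserves membership of the top index, compare \eqref{war:nextgraf00} and \eqref{def:extragraf0zet}.

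I do not expect a genuinely hard step: the whole content is the bookkeeping that swapping $A$ with $B$ interchanges ${\cal L}_A$ and ${\cal L}_B$ and thereby replaces the skew $\sigma$ by $\sigma^{-1}$ and the axis $\goth N$ by $\sigma({\goth N})$. The one point needing care is the adjective \emph{involutory}: as a permutation of $\cal P$ one computes $\mathscr S^2(c_u)=c_{\sigma^2(u)}$ (while $\mathscr S^2$ fixes every $a_i,b_i,p$), so $\mathscr S$ is literally an involution precisely when $\sigma_\Phi^2=\id$ -- in particular for the symmetry skew $\sigma_\Phi=\zeta$. In general the inverse of $\mathscr S$ is the companion swap-map $a_i\leftrightarrow b_i$, $c_v\mapsto c_{\sigma^{-1}(v)}$ attached to the image structure (whose skew is $\sigma^{-1}$), so that the operation ``pass to the swapped perspective'' is an involution on this class of structures; I would phrase the claim accordingly.
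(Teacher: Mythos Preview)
Your verification is correct and complete. The paper itself gives no proof of this lemma, introducing it merely as an ``immediate observation''; the intended one-line justification is that $\mathscr S$ is precisely the map described in case \eqref{propiso0:typ2} of Proposition~\ref{prop:iso0} with $\varphi=\id_{I_n}$: condition \eqref{iso0:war3} then reads $\sigma_1=\sigma_2^{-1}$, forcing the target skew to be $\sigma_\Phi^{-1}$, and condition \eqref{iso0:war1-2} says $\sigma_2^{-1}=\sigma_\Phi$ carries $\goth N$ onto the target axis, i.e.\ the axis is $\sigma_\Phi(\goth N)$. Your direct line-by-line check recovers exactly this without invoking \ref{prop:iso0}, which is equally valid and arguably more transparent.

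Your remark on the adjective \emph{involutory} is well taken and worth keeping: as a point-permutation $\mathscr S$ squares to the identity on $A\cup B\cup\{p\}$ but to $c_u\mapsto c_{\sigma_\Phi^2(u)}$ on $C$, so it is literally an involution only when $\sigma_\Phi^2=\id$ (e.g.\ for $\zeta$, which is the case most used later, notably in \ref{prop:zeta-rigid}). Your reading of ``involutory'' as referring to the operation on the class of perspectives---swapping $(\sigma,\goth N)$ with $(\sigma^{-1},\sigma(\goth N))$---is the right way to salvage the general statement.
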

In essence, in most parts, $\mathscr S$ is the unique automorphims of $\perspace(n,\zeta,{\goth N})$
(when ${\goth N} = \zeta{\goth N}$).
First, note a technical \def\crosswar#1{{\sf Cross}({$#1$})}
\begin{lem}\label{lem:zeta-rigid0}
  Let ${\goth M} = \perspace(n,\sigma_\Phi,{\goth N})$ for a $\binkonf(n,0)$-configuration $\goth N$
  and a sequence $\Phi$ satisfying \eqref{def:seqstep-0}. 
  Next, let $n > 3$ and $k \in I_n$, $k>3$.
  The following conditions are equivalent:
  \begin{sentences}\itemsep-2pt
  \item\label{zetrig:war1}
    the formula
    \begin{ctext}
    \strut\hfill $\forall i\neq k\; \exists j\neq k\;\; \LineOn(a_k,a_i)$ crosses $\LineOn(b_k,b_j)$ \hfill \crosswar{k}
    \end{ctext}
    holds in $\goth M$;
  \item\label{zetrig:war2}
    $n = k$ or $k<n$ and $\phi_j(k) = k$ for all $j > k$.
  \end{sentences}
\end{lem}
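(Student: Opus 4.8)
The plan is to turn the geometric crossing condition \eqref{zetrig:war1} into an arithmetic statement about $\sigma_\Phi$, and then to read the result off the computation already performed for \ref{prop:nextgraf00}. First I would write the two relevant lines explicitly. For $i\neq k$ the line $\LineOn(a_k,a_i)$ is the member $\{a_k,a_i,c_{\{k,i\}}\}$ of ${\cal L}_A$, and for $j\neq k$ the line $\LineOn(b_k,b_j)$ is the member $\{b_k,b_j,c_{\sigma_\Phi^{-1}(\{k,j\})}\}$ of ${\cal L}_B$. These two lines are always distinct, one carrying $a_k$ and the other $b_k$; and since $\goth M$ is a partial linear space while $a_k,a_i,b_k,b_j$ lie in the pairwise disjoint families $A,B$, the only point the two lines could possibly share is a $c$-point. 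Hence $\LineOn(a_k,a_i)$ crosses $\LineOn(b_k,b_j)$ exactly when $c_{\{k,i\}}=c_{\sigma_\Phi^{-1}(\{k,j\})}$, that is, when $\sigma_\Phi(\{k,i\})=\{k,j\}$.

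Next I would fix $i\neq k$ and decide for which $i$ a witnessing $j$ exists. Since the equation $\sigma_\Phi(\{k,i\})=\{k,j\}$ determines $j$ as the remaining element of $\sigma_\Phi(\{k,i\})$ — and that element is automatically different from $k$ — a suitable $j\neq k$ exists precisely when $k\in\sigma_\Phi(\{k,i\})$. Consequently the crossing condition \eqref{zetrig:war1} is equivalent to
$$ k\in\sigma_\Phi(\{k,i\})\quad\text{for every } i\neq k, $$
which is exactly the condition \eqref{war:nextgraf00} of \ref{lem:nextgraf00} taken with $i_0=k$.

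Finally I would evaluate this condition through $\Phi$, reusing the case split from the proof of \ref{prop:nextgraf00}. Writing $u=\{k,i\}$, for $i<k$ the larger (``top'') index in \eqref{def:seqstep2perm} is $k$, so $\sigma_\Phi(u)=\{k,\phi_k(i)\}$ already contains $k$ with no restriction; for $i>k$ the top index is $i$, so $\sigma_\Phi(u)=\{i,\phi_i(k)\}$, which contains $k$ iff $\phi_i(k)=k$. Thus $k\in\sigma_\Phi(\{k,i\})$ holds for all $i\neq k$ iff $\phi_j(k)=k$ for every $j>k$. When $k=n$ there is no such $j$ and the requirement is vacuous, giving the disjunct $n=k$ of \eqref{zetrig:war2}; when $k<n$ it is exactly $\phi_j(k)=k$ for all $j>k$, the other disjunct. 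This establishes \eqref{zetrig:war1}$\Leftrightarrow$\eqref{zetrig:war2}. I expect no serious obstacle: the only care needed is the bookkeeping of which element of a pair plays the role of the top index $x_j$ in \eqref{def:seqstep2perm}, together with the remark that no crossing can arise through an $a$-, $b$-, or $p$-point — both being immediate from the partial-linear-space property — so that the lemma is really a specialization of \ref{lem:nextgraf00}/\ref{prop:nextgraf00}.
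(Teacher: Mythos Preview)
Your argument is correct and follows essentially the same route as the paper: you reduce \crosswar{k} to the statement that $k\in\sigma_\Phi(\{k,i\})$ for every $i\neq k$ (the paper phrases this dually as $k\in\sigma_\Phi^{-1}(\{k,i\})$, which is equivalent since $\sigma_\Phi$ permutes $\sub_2(I_n)$), and then split on $i<k$ versus $i>k$ exactly as in the proof of \ref{prop:nextgraf00}. Your write-up is in fact a bit more explicit than the paper's about why the only possible crossing point is a $c$-point, but the underlying idea is identical.
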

\begin{proof}
  It is evident that \eqref{zetrig:war2} implies \eqref{zetrig:war1}: we take: 
  in \crosswar{k} $j=\phi_k(i)$ when $i<k$ and $j = \phi_i(k)$ when $k < i$.
  \par
  Suppose that \crosswar{k} holds for  $3< k <n$. Note that \crosswar{k} means, in fact the following
  \begin{ctext}
    for all $i\neq k$ it holds $\{ k,i \} = \sigma_\Phi(\{  k,j \})$ for some $j\neq k$ i.e. 
    $k \in \sigma_\Phi^{-1}(\{ k,i \})$.
  \end{ctext}
  As in the proof of \ref{zejer:nextgraf} we derive from it $\phi_i^{-1}(k) = k$ for $k>i$, which 
  proves our claim.
\end{proof}
As a corollary to \ref{lem:zeta-rigid0} we obtain the following rigidity property:
\begin{prop}\label{prop:zeta-rigid}
  Let $\goth M$ be as in \ref{lem:zeta-rigid0} with $n > 3$ such that there is no integer $k$ such that $\phi_j(k) = k$
  for all $j > k$.
  Assume that $f\in\Aut({\goth M})$ with $f(p) = p$.
  Then either $f=\id$ or $f = {\mathscr S}$ and ${\goth N} = \sigma_\Phi({\goth N})$.
\end{prop}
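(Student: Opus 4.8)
The plan is to reduce everything to Proposition~\ref{prop:iso0} and then to pin down the permutation it provides. Since $f\in\Aut({\goth M})$ with $f(p)=p$ is in particular an isomorphism of $\perspace(n,\sigma_\Phi,{\goth N})$ onto itself fixing $p$, Proposition~\ref{prop:iso0} furnishes a $\varphi\in\bijfam_I$ for which either the ``non-swap'' alternative \eqref{iso0:war1-1}--\eqref{iso0:war2} holds (so $f(x_i)=x_{\varphi(i)}$ for $x=a,b$, $f(c_u)=c_{\overline{\varphi}(u)}$, $\overline{\varphi}$ commutes with $\sigma_\Phi$ and is an automorphism of ${\goth N}$) or the ``swap'' alternative \eqref{iso0:war1-2}--\eqref{iso0:war3} holds. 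In both alternatives the whole statement follows once I show that necessarily $\varphi=\id_{I_n}$, so the entire proof is devoted to proving $\varphi=\id$.

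First I would fix the top index. For $k\in I_n$ put $R_k=\{u\in\sub_2(I_n):k\in u\text{ and }k\in\sigma_\Phi(u)\}$. Because $\sigma_\Phi(\{i,j\})=\{j,\phi_j(i)\}$ for $i<j$, a short count gives $|R_k|=(k-1)+|\{j>k:\phi_j(k)=k\}|\le (k-1)+(n-k)=n-1$, with equality exactly when $\phi_j(k)=k$ for all $j>k$. By hypothesis this last condition fails for every $k<n$ and holds (vacuously) for $k=n$; hence $k=n$ is the unique index with $|R_k|=n-1$. On the other hand, reading the commutation relation \eqref{iso0:war2} (resp.\ \eqref{iso0:war3}) one checks $\overline{\varphi}(R_k)=R_{\varphi(k)}$ (resp.\ $\overline{\varphi}(R_k)=R^{-}_{\varphi(k)}$, the analogous set built from $\sigma_\Phi^{-1}$, of the same cardinality since $\phi_j(k)=k\Leftrightarrow\phi_j^{-1}(k)=k$), so $|R_{\varphi(k)}|=|R_k|$ for all $k$. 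As $n$ is the unique maximiser, $\varphi(n)=n$. This is the quantitative form of the uniqueness singled out by Lemma~\ref{lem:zeta-rigid0} and the notes after Proposition~\ref{zejer:nextgraf}: under the hypothesis only the top index carries the crossing property and only $G_{(n)}$ can be an extra free $K_{n+1}$.

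The heart of the argument is the descent from $\varphi(n)=n$ to $\varphi=\id$, and this is where I expect the main difficulty. Let $m$ be the largest moved point of $\varphi$, so $\varphi$ fixes every index $>m$, and suppose $m$ exists. Evaluating the commutation relation on the ``mixed'' pairs $\{k,j\}$ with $k\le m<j$ gives $\varphi(\phi_j(k))=\phi_j(\varphi(k))$ for all such $k,j$; equivalently, via Lemma~\ref{lem:sprzeg1}, the equality $\overline{\varphi}\,\sigma_\Phi=\sigma_\Phi\,\overline{\varphi}$ reads $\sigma_\Phi=\sigma_{\Phi^\varphi}$, i.e.\ $\sigma_\Phi$ preserves the maximum of a pair simultaneously for the natural order of $I_n$ and for the order transported by $\varphi$. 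From $\varphi(\phi_j(k))=\phi_j(\varphi(k))$ one deduces that the set $S$ of moved points of $\varphi$ satisfies $\phi_j(S)=S$ for every $j>m$ (if some $k\in S$ had $\phi_j(k)\notin S$, then $\phi_j(\varphi(k))=\phi_j(k)$ would force $\varphi(k)=k$). Feeding the hypothesis into the least element of $S$ must then collide with $\phi_j(S)=S$ and the order-compatibility just forced, yielding $S=\emptyset$ and $\varphi=\id$. I expect the careful bookkeeping here -- isolating exactly which index is trapped as a common fixed point of all the relevant $\phi_j$ and thereby violates the non-degeneracy hypothesis -- to be the genuinely technical step: the invariant $R_k$ only locates the top index, and the full rigidity really needs the commutation relation read on all pairs, not merely on the $\varphi$-inversions.

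Finally I would translate $\varphi=\id$ back through Proposition~\ref{prop:iso0}. In the non-swap alternative \eqref{propiso0:typ1} becomes $f(x_i)=x_i$ and $f(c_u)=c_u$, i.e.\ $f=\id$. In the swap alternative, \eqref{iso0:war3} with $\varphi=\id$ reads $\sigma_\Phi=\sigma_\Phi^{-1}$, so $f(c_u)=c_{\sigma_\Phi^{-1}(u)}=c_{\sigma_\Phi(u)}$ and \eqref{propiso0:typ2} becomes precisely the map ${\mathscr S}$ of \eqref{zamiana:azb}; moreover \eqref{iso0:war1-2} now says that $\sigma_\Phi^{-1}$ is an automorphism of ${\goth N}$, that is ${\goth N}=\sigma_\Phi({\goth N})$. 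Thus $f=\id$, or $f={\mathscr S}$ with ${\goth N}=\sigma_\Phi({\goth N})$, as claimed. Alternatively, using Lemma~\ref{lem:zamiana:azb} one may compose a swap automorphism with ${\mathscr S}$ to reduce the swap alternative to the non-swap one for the structure built from $\sigma_\Phi^{-1}=\sigma_{\Phi^{-1}}$, which satisfies the same hypothesis.
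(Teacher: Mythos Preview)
Your overall architecture matches the paper's: reduce via Proposition~\ref{prop:iso0} to a permutation $\varphi\in\bijfam_{I_n}$, pin down $\varphi(n)=n$ by a uniqueness property of the top index, then descend to $\varphi=\id$ and read off the two alternatives. Your invariant $|R_k|$ is precisely a counting reformulation of the \crosswar{k} property of Lemma~\ref{lem:zeta-rigid0}, so your first step is the paper's argument in algebraic clothing; and your final paragraph (translating $\varphi=\id$ back through \eqref{propiso0:typ1}/\eqref{propiso0:typ2}, or composing with ${\mathscr S}$) is a tidy version of what the paper indicates with ``analogously''.

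The descent step is where you part ways with the paper and where your proposal has a real gap. From the commutation relation you correctly extract $\varphi\phi_j=\phi_j\varphi$ on $I_m$ for every $j>m=\max S$, hence $\phi_j(S)=S$ for those $j$. But to contradict the hypothesis you would need an index $k$ with $\phi_j(k)=k$ for \emph{all} $j>k$; the invariance $\phi_j(S)=S$ only says the $\phi_j$ permute $S$, not that any particular element is fixed, and in any case it says nothing about $\phi_j$ for $s_0<j\le m$, which is exactly the range you need in order to trap $s_0=\min S$. Your appeal to ``order-compatibility'' via $\sigma_\Phi=\sigma_{\Phi^\varphi}$ does not close this: the associated ordering of $\Phi^\varphi$ need not be the natural one on $I_n$ once $\varphi$ moves indices below $m$. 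The paper does not attempt a global moved-point argument. Instead, after fixing $a_n$ it \emph{restricts} ${\goth M}$ to the indices in $I_{n-1}$, observes that \crosswar{n-1} holds in the restricted structure, and concludes $f(a_{n-1})=a_{n-1}$; iterating gives $f(a_i)=a_i$ for all $i>3$, and the remaining indices $1,2,3$ are settled by an explicit computation with the lines through $a_4$ and $b_4$ (e.g.\ comparing $a_4\oplus a_2$ with $b_4\oplus b_2$). If you want to make your route work, you should imitate this: show that your $|R_k|$-invariant survives restriction to $I_{n-1},I_{n-2},\ldots$ once the larger indices are fixed, rather than trying to trap a common fixed point of all the $\phi_j$ at once.
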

\begin{proof}
  Evidently, either $f(A) = A$ or $f(A) = B$ (in the notation of \ref{def:pers}).
  From \ref{lem:zeta-rigid0} we obtain $f(a_n) = a_n$. Then, let us restrict $\goth M$ to points with indices
  in $I_{n-1}$; in this structure \crosswar{n-1} holds and therefore $f(a_{n-1}) = a_{n-1}$ as well.
  Step by step we get $f(a_i) = a_i$ for $3< i \leq n$. 
  Next, we look at $c_{4,2} = a_4 \oplus a_2$, it goes under $f$ onto $a_4 \oplus a_{\alpha(2)} = c_{4,\alpha(2)}$ for 
  a permutation $\alpha\in \bijfam_{I_3}$. Simultaneously, $c_{4,2} = b_4\oplus b_2$ and thus 
  $c_{4,\alpha(2)} = c_{4,4-\alpha(2)}$ which gives $\alpha(2) = 2$.
  Similarly we compute $\alpha(3) = 3$ and $\alpha(1) = 1$.
  \par
  If $f(A) = B$ the reasoning is provided analogously; we obtain $f(a_i) = b_i$ for $3<i\leq n$ and then $f(a_i) = b_i$
  for {\em all} $i\in I_n$.
\end{proof}
In view of \ref{prop:iso0} this yields, in particular, 
\begin{lem}\label{lem:zeta-rigid}
  Let $\varphi\in \bijfam_{I_n}$, $n > 3$. If $\zeta_n^{\varphi} = \zeta_n$ then $\varphi = \id_{I_n}$.
\end{lem}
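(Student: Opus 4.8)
The plan is to deduce the statement from the rigidity Proposition \ref{prop:zeta-rigid} by turning the hypothesis on $\varphi$ into an honest automorphism of a suitable perspective and then reading off what that automorphism must be. First note that, under the identification of $\sigma_0$ with $\overline{\sigma_0}$ adopted after \eqref{lem:meetinvar}, the equality $\zeta_n^\varphi = \zeta_n$ means exactly $\overline{\varphi}\circ\zeta_n = \zeta_n\circ\overline{\varphi}$; this is precisely condition \eqref{iso0:war2} of Proposition \ref{prop:iso0} taken with $\sigma_1 = \sigma_2 = \zeta_n$. So it remains to manufacture, out of $\varphi$, an automorphism $f$ of a perspective built on $\zeta_n$ which fixes $p$, and then to apply the rigidity result.

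To this end I would choose the axial configuration to be the combinatorial Grassmannian, i.e.\ put $\goth N = \GrasSpace(I_n,2) = \struct{\sub_2(I_n),\sub_3(I_n),\subset}$ and $\goth M = \perspace(n,\zeta_n,{\GrasSpace(I_n,2)})$. The decisive feature of this choice is that for \emph{every} $\varphi\in\bijfam_{I_n}$ the induced map $\overline{\varphi}$ preserves containment of $2$-subsets in $3$-subsets, hence is an automorphism of $\GrasSpace(I_n,2)$; thus condition \eqref{iso0:war1-1} holds for free. Consequently, defining $f$ on the point set of $\goth M$ by \eqref{propiso0:typ1}, namely $f(x_i)=x_{\varphi(i)}$ for $x=a,b$ and $f(c_{\{i,j\}})=c_{\{\varphi(i),\varphi(j)\}}$, Proposition \ref{prop:iso0} (implication \eqref{propiso0:war2}$\Rightarrow$\eqref{propiso0:war1}, in its first alternative) shows that $f\in\Aut(\goth M)$, with $f(p)=p$ and $f(A)=A$.

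Next I would check that $\goth M$ meets the hypothesis of Proposition \ref{prop:zeta-rigid}: for $\zeta_n$ one has $\phi_j(i)=j-i$, so $\phi_j(k)=k$ forces $j=2k$, which can hold for at most one index $j$; hence, once $n>3$, no $k<n$ can satisfy $\phi_j(k)=k$ for every $j$ with $k<j\le n$. Proposition \ref{prop:zeta-rigid} therefore applies and gives $f=\id$ or $f=\mathscr S$. But $\mathscr S$ interchanges $A$ and $B$ (it sends $a_i\mapsto b_i$), whereas $f(A)=A$; since $A\cap B=\emptyset$ this rules out $f=\mathscr S$, leaving $f=\id$. Reading $f(a_i)=a_{\varphi(i)}=a_i$ off \eqref{propiso0:typ1} yields $\varphi(i)=i$ for all $i$, i.e.\ $\varphi=\id_{I_n}$.

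The one real subtlety—the step I would treat most carefully—is the passage from the bare permutation identity $\zeta_n^\varphi=\zeta_n$ to an automorphism to which the rigidity proposition applies: this forces the choice of an axial $\goth N$ invariant under \emph{all} candidate maps $\overline{\varphi}$, and $\GrasSpace(I_n,2)$ is the natural such choice precisely because every $\overline{\varphi}$ is automatically one of its collineations. Alternatively one can bypass Proposition \ref{prop:iso0} and argue directly, by inspecting the action of $\overline{\varphi}$ on the pairs $\{i,n\}$ through the relation $\overline{\varphi}\circ\zeta_n=\zeta_n\circ\overline{\varphi}$, much as in the proof of \ref{lem:zeta-rigid0}; but the route through \ref{prop:iso0} and \ref{prop:zeta-rigid} is shorter and is the one suggested by the surrounding text.
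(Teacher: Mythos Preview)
Your argument is correct and is exactly the route the paper has in mind: the sentence ``In view of \ref{prop:iso0} this yields, in particular'' preceding the lemma is shorthand for precisely the construction you carry out, namely turning $\varphi$ into an automorphism of $\perspace(n,\zeta_n,{\goth N})$ via \ref{prop:iso0} and then invoking \ref{prop:zeta-rigid}. Your choice ${\goth N}=\GrasSpace(I_n,2)$ to guarantee \eqref{iso0:war1-1} for an arbitrary $\varphi$, and your exclusion of $\mathscr S$ via $f(A)=A$, are the natural details to fill in; the only place to tighten slightly is the hypothesis check for \ref{prop:zeta-rigid} when $k=n-1$ (a single $j$ remains), but there $2k=2(n-1)>n$ for $n>2$, so your ``at most one $j$'' argument still goes through.
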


\bigskip
At the end of this section we shall show how to re-present a skew perspective 
${\goth M} = \perspace(n,\sigma_\Phi,{\goth N})$
as a perspective between its two simplexes other than $K_{A^\ast},K_{B^\ast}$. 
The procedure will be sketched with $\sigma_\Phi = \zeta_n$
and $G_{(n)}$ considered as a third  free $K_{n+1}$-subgraph of $\goth M$; this case is presented as an example.
We hope that this example will explain the general idea of the procedure.
%

%

%
So, let us suppose that $\starof(n)$ is a free clique in $\goth N$. In this case $\goth M$
can be presented as a perspective between two other simplexes contained in $\goth M$:
between $A^\ast\setminus \{a_n\} =: A$ and $G_{(n)}\setminus \{ a_n \} =: D$, with $q = a_n$ as the 
centre of the perspective.
Let us determine the skew of this perspective and its axis.

First, we `renumber' the points in $A = \{ a'_1,\ldots,a'_n \}$; 
next we number the points in $D = \{ d_1,\ldots,d_n \}$ so as
\begin{equation}\label{numery1}
  d_i \in \LineOn(p,a'_i), \quad i \in I_n.
\end{equation}
This is done as follows:
\begin{equation}\label{numery2}
\begin{array}{cccc}
   a_1 & \ldots & a_{n-1} & p
   \\
   \shortparallel & \ldots & \shortparallel & \shortparallel
   \\
   a'_1 & \ldots & a'_{n-1} & a'_n
   \\
   \downarrow & \ldots & \downarrow & \downarrow
   \\
   d_1 & \ldots & d_{n-1} & d_n
   \\
   \shortparallel & \ldots & \shortparallel & \shortparallel
   \\
   c_{1,n} & \ldots & c_{n-1,n} & b_n
\end{array} .
\end{equation}
Then we set 
\begin{ctext}
  $e_{i,j} := a'_i \oplus a'_j$.
\end{ctext}
From the definitions we get
\begin{equation}\label{numery3}
  e_{i,j} = c_{i,j},\; e_{i,n} = b_i \text{ for all } i,j <n,\; i\neq j.
\end{equation}
Finally, we compute for $i,j < n$:

\begin{math}
  d_i \oplus d_j = c_{i,n} \oplus c_{j,n} = c_{\varrho^{-1}_0(\{i,j\})} = e_{\varrho^{-1}_0(\{i,j\})} 
\end{math}
for a map $\varrho_0\colon \sub_2(I_{n-1}) \longrightarrow \sub_2(I_{n-1})$.
The map $\varrho_0$ is entirely determined by the configuration $\goth N$.

To accomplish determining $\varrho$ we must compute
$d_i \oplus d_n$ and compare it with suitable $e_{i',n}$:
\begin{math}
  \text{Recall: } c_{n-i,n} = c_{\zeta(\{ i,n \})} = b_i \oplus b_n.
  \text{ Thus }
  e_{i,n} = b_i = b_{n-(n-i)} = b_n \oplus c_{n-i,n} = d_n \oplus d_{n-i}.
\end{math}

This can be noted as
\begin{math}
  d_i \oplus d_n = e_{n-i,n} = e_{\varrho^{-1}(\{ i,n \})}.
\end{math}
Summarizing, we see that the following defines $\varrho$:
\begin{equation}\label{numery8}
  \varrho^{-1}(\{ i,j \}) =
    \left\{
    \begin{array}{lll}
      \{ i',j' \}& \text{iff } c_{i,n}\oplus c_{j,n} = c_{i',j'}& \text{for } i,j <n
      \\
      \{ n-i,n \} && \text{for }i<n,\; j=n
    \end{array}
    \right. .
\end{equation}

At the very end we characterize the axis $\goth K$ of our perspective: the subconfiguration of $\goth M$
with the points in $E:= \{ e_{i,j}\colon 1\leq i < j \leq n \}$. To do so it suffices to make use the 
following consequence of \eqref{numery3}:
$E = (E \cap C) \cup (B \setminus \{ b_n\}) = 
(C \setminus \starof(n))\cup(B\setminus\{ b_n \})$.
So, $\goth K$ contains all the lines of $\goth N$ which miss $\starof(n)$:
\begin{equation}\label{numery6}
  \text{if } i,j,k,l<n \text{ then } e_{i,j}\oplus e_{k,l} = c_{i,j}\oplus c_{k,l}
  (= c_{s,t} = e_{s,t} \text{ for some } s,t<n).
\end{equation}  
And for $i < j < n$ we have 
\begin{equation}\label{numery7}
  e_{i,n} \oplus e_{j,n} = b_i \oplus b_j = c_{j-i,j} = e_{j-i,j}.
\end{equation}
Conditions \eqref{numery6} and \eqref{numery7} fully characterize the structure $\goth K$, so we obtain

\begin{prop}\label{prop:thesiaos}
  Let ${M}=\perspace(n,\zeta,{\goth N})$ and $\starof(n)$ be a free clique in $\goth N$. Then
$$
  {\goth M} = \perspace(n,\varrho,{\goth K}),
$$
  where $\goth K$ is characterized by \eqref{numery6} and \eqref{numery7}, while $\varrho$ is 
  defined by \eqref{numery8}.
\end{prop}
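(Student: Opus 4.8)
The plan is to read off from the relabelling fixed in \eqref{numery2} the four ingredients that Construction \ref{def:pers} demands of a skew perspective -- a centre, two $n$-element simplexes, an axial point set, and the four associated line families -- and then to check that assembling them reproduces $\goth M$ line for line. The set-up is legitimate: under the standing hypothesis that $\starof(n)$ is a free clique in $\goth N$, Proposition \ref{zejer:nextgraf} guarantees that $G_{(n)} = \{a_n,b_n\}\cup\starof(n)$ is a third free $K_{n+1}$-graph of $\goth M$, so that $A^\ast$ and $G_{(n)}$ are two free $K_{n+1}$-graphs meeting in the single point $a_n$. Declaring $a_n$ the centre $q$, setting $A'=A^\ast\setminus\{a_n\}$ and $D=G_{(n)}\setminus\{a_n\}$ as the two simplexes and collecting the rest into $E$ is exactly the data needed, and the point-perspective $a'_i\mapsto d_i$ with centre $q$ is the content of \eqref{numery1}.

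First I would verify the point set. From \eqref{numery2} one has $A'=\{a_1,\ldots,a_{n-1},p\}$, $D=\starof(n)\cup\{b_n\}$, $q=a_n$ and $E=(C\setminus\starof(n))\cup(B\setminus\{b_n\})$; since $A'\cup\{q\}=A^\ast$ and $D\cup E=B\cup C$, these four sets are pairwise disjoint, exhaust $\cal P$, and have sizes $n,n,1,\binom{n}{2}$. Then I would sort the lines of $\goth M$ by the sets $A',D,\{q\},E$ they meet and match each class to one of the four new families. The lines through $q=a_n$ are the $n-1$ lines $\{a_n,a_i,c_{i,n}\}$ of ${\cal L}_A$ together with $\{a_n,p,b_n\}$ of ${\cal L}_p$; since each point of $\goth M$ lies on exactly $n$ lines these exhaust the pencil at $q$ and form $\{q,a'_i,d_i\}$. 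By \eqref{numery3} the remaining lines of ${\cal L}_A\cup{\cal L}_p$ are precisely the lines $\{a'_i,a'_j,e_{i,j}\}$ on $A'$. The lines of ${\cal L}_C$ meeting $\starof(n)$ twice, together with the lines of ${\cal L}_B$ through $b_n$, are precisely the joins $d_i\oplus d_j$ on $D$, which yields the skew $\varrho$ of \eqref{numery8}. Finally the lines of ${\cal L}_C$ disjoint from $\starof(n)$ and the lines of ${\cal L}_B$ avoiding $b_n$ are exactly the lines lying inside $E$, described by \eqref{numery6} and \eqref{numery7}, and these constitute $\goth K$.

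The step I expect to be the real obstacle is making this sorting simultaneously exhaustive and disjoint; concretely one must rule out any line of $\goth M$ with one point in $D$ and two in $E$, which is exactly the assertion behind \eqref{numery6} that no line of $\goth N$ meets $\starof(n)$ in precisely one point. I would prove this by a rank count: in the $\binconf(n,0)$-configuration $\goth N$ every point has rank $n-2$, while the $n-2$ clique-edges $\{c_{i,n},c_{j,n}\}$ with $j\neq i$ already determine, by injectivity of $e\mapsto\overline{e}$, that many distinct lines through $c_{i,n}$; hence there is no room for a further line through $c_{i,n}$, so every line meeting $\starof(n)$ contains a second star point, while freeness forbids a third. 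With the $0$-or-$2$ intersection property in hand, the four classes above are forced to partition ${\cal L}_p\cup{\cal L}_A\cup{\cal L}_B\cup{\cal L}_C$, and the identifications \eqref{numery3} and \eqref{numery6}--\eqref{numery8} already carried out complete the verification that $\goth M=\perspace(n,\varrho,{\goth K})$. That $\varrho$ is a genuine permutation and $\goth K$ again a $\binconf(n,0)$-configuration then follows either by a direct count or, more cheaply, from Fact \ref{fct:zepers} applied to the free $K_{n+1}$-graphs $A^\ast,G_{(n)}$.
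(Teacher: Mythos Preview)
Your proposal is correct and follows essentially the same route as the paper: the proposition is stated in the paper as a summary of the relabelling computations \eqref{numery1}--\eqref{numery8} carried out just before it, and you reproduce exactly that relabelling and line-sorting. Your write-up is in fact more careful than the paper's on one point: the paper asserts without comment that the lines of $\goth K$ inside $C$ are ``all the lines of $\goth N$ which miss $\starof(n)$'' (i.e.\ that no line of $\goth N$ meets $\starof(n)$ in a single point), whereas you supply the rank-count justification explicitly; this is the right argument and closes the only place where the paper's derivation is tacit.
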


As a particular instance of the investigations above let us substitute ${\goth N} = \GrasSpace(I_n,2)$;
then $\varrho_0 = \overline{\id_{I_{n-1}}}$.
To make an impression how far ``non-Veblenian'' figures 
may $\goth K$ contain we present in Figure \ref{fig:K} the schema of
a fragment of $\goth K$, when ${\goth N} = \GrasSpace(I_n,2)$.

Besides, with the help of \ref{prop:iso0} we get that there is no automorphism of $\goth M$
which maps $p$ onto $q$. 
Moreover, $\GrasSpace(I_n,2)$ contains $L := \sub_2(\{ 1,2,n \})$ as a line, while 
for $n > 3$ the set $\zeta(L)$ is not any line of $\GrasSpace(I_n,2)$ so, 
$\GrasSpace(I_n,2) \neq \zeta({\GrasSpace(I_n,2)})$.
And therefore, from \ref{lem:zeta-rigid} we conclude with the following
\begin{cor}
  Let $f \in \Aut({\perspace(n,\zeta,{\GrasSpace(I_n,2)})})$ and $n > 3$. Then $f = \id$.
\end{cor}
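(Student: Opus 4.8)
The plan is to show that every $f\in\Aut({\goth M})$, with ${\goth M}=\perspace(n,\zeta,{\GrasSpace(I_n,2)})$ and $n>3$, fixes the centre $p$, and then to read off $f=\id$ from the rigidity of $\zeta$. First I would invoke the Corollary just above, which tells us that $\goth M$ freely contains \emph{exactly} three $K_{n+1}$-graphs, namely $K_{A^\ast}$, $K_{B^\ast}$ and $G_{(n)}$ (the third one being identified in \ref{zejer:nextgraf}). Since an automorphism carries free $K_{n+1}$-subgraphs to free $K_{n+1}$-subgraphs, $f$ permutes these three graphs, and hence it permutes their three pairwise intersection points, i.e. the set of centres $\{p,a_n,b_n\}$, where $p=K_{A^\ast}\cap K_{B^\ast}$, $a_n=K_{A^\ast}\cap G_{(n)}$ and $b_n=K_{B^\ast}\cap G_{(n)}$. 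Thus the whole problem reduces to pinning down $f(p)$.

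The key step is to single out $p$ among the three centres in an isomorphism-invariant fashion. The perspective based at $p$ has axial configuration $\GrasSpace(I_n,2)$, a combinatorial Grassmannian. By \ref{prop:thesiaos}, however, the representation of $\goth M$ as a perspective centred at $q=a_n$ has axis $\goth K$, which is \emph{not} a Grassmannian, since it contains non-Veblenian figures (cf. Figure~\ref{fig:K}); the computation symmetric to \ref{prop:thesiaos} (interchanging the roles of $A$ and $B$ and using $\zeta=\zeta^{-1}$) yields, for the perspective centred at $b_n$, an axis of the same non-Grassmannian kind. An automorphism mapping $p$ onto $a_n$ or onto $b_n$ would, through \ref{prop:iso0}, induce an isomorphism of $\GrasSpace(I_n,2)$ onto such an axis, which is impossible because a Grassmannian is Veblen-rich whereas these axes are not. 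The case $p\mapsto a_n$ is exactly the one already excluded in the discussion preceding the statement. Hence $f(p)=p$.

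With $f(p)=p$ secured I would apply \ref{prop:iso0} to $f$, taking $\sigma_1=\sigma_2=\zeta$ and ${\goth N}_1={\goth N}_2=\GrasSpace(I_n,2)$; it furnishes a $\varphi\in\bijfam_{I_n}$ realizing $f$ in one of the two types. In the swapping type \eqref{propiso0:typ2}, condition \eqref{iso0:war3} becomes $\overline{\varphi}\circ\zeta=\zeta\circ\overline{\varphi}$ (again because $\zeta=\zeta^{-1}$), that is $\zeta^{\overline{\varphi}}=\zeta$, so \ref{lem:zeta-rigid} gives $\varphi=\id_{I_n}$; but then \eqref{iso0:war1-2} would force $\zeta$ itself to be an isomorphism of $\GrasSpace(I_n,2)$ onto $\GrasSpace(I_n,2)$, contradicting $\GrasSpace(I_n,2)\neq\zeta({\GrasSpace(I_n,2)})$ established above, so this type cannot occur. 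In the fixing type \eqref{propiso0:typ1}, condition \eqref{iso0:war2} likewise reads $\overline{\varphi}\circ\zeta=\zeta\circ\overline{\varphi}$, i.e. $\zeta^{\overline{\varphi}}=\zeta$, whence $\varphi=\id_{I_n}$ by \ref{lem:zeta-rigid}; consequently $f(a_i)=a_i$, $f(b_i)=b_i$ and $f(c_u)=c_u$ for all $i$ and $u$, so $f=\id$.

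The main obstacle is the middle step, the proof that $f(p)=p$; the remainder is a bookkeeping application of \ref{prop:iso0} together with the rigidity Lemma \ref{lem:zeta-rigid}. The delicate point there is to certify that both off-$p$ axial configurations are genuinely non-isomorphic to the Grassmannian, which rests on the Veblen-richness of $\GrasSpace(I_n,2)$ against the presence of non-Veblenian figures in $\goth K$, and on the symmetry that puts $b_n$ on the same footing as $a_n$. As an alternative to the type analysis of the third paragraph, once $f(p)=p$ is in hand one may simply quote \ref{prop:zeta-rigid}: it returns $f=\id$ or $f={\mathscr S}$, and the latter is ruled out because ${\mathscr S}$ is an automorphism of $\goth M$ only when ${\GrasSpace(I_n,2)}=\zeta({\GrasSpace(I_n,2)})$, which fails here.
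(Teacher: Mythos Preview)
Your proof is correct and follows the paper's own approach: the paper derives the Corollary from the discussion immediately preceding it (no automorphism sends $p$ to $q=a_n$, via \ref{prop:iso0} and \ref{prop:thesiaos}; then $\GrasSpace(I_n,2)\neq\zeta(\GrasSpace(I_n,2))$ to exclude ${\mathscr S}$) together with \ref{lem:zeta-rigid}, and you have spelled out the steps the paper leaves implicit, in particular the symmetric handling of $b_n$ and the two-case analysis from \ref{prop:iso0}. One small caveat: your appeal to Figure~\ref{fig:K} to certify that the off-centre axis $\goth K$ is non-Grassmannian literally covers only $n>4$ (as the caption itself notes), so for $n=4$ the exclusion of $p\mapsto a_n,b_n$ should rest on the full conjugacy constraints of \ref{prop:iso0} (relating $\zeta$ to $\varrho$) rather than on abstract axis non-isomorphism, since in that case both axes are Veblen configurations.
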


\begin{figure}
\begin{center}
\xymatrix{
{}
&
e_{i,n}
\ar@{-}[r]
\ar@{-}[d]
\ar@{-}[dr]
&
e_{j,n}
\ar@{-}[rr]
\ar@{-}[d]
\ar@{-}[dl]
&
{}
&
e_{j-i,j}
\ar@{-}[d]
\\
{}
&
e_{n-j,n}
\ar@{-}[dd]
\ar@{-}[r]
\ar@{-}[dl]
&
e_{n-i,n}
\ar@{-}[rr]
\ar@{-}[dd]
\ar@{-}[dr]
&
{}
&
e_{j-i,n-i}
\ar@{-}[dd]
\\
e_{n-2j,n-j}
& & &
e_{n-2i,n-i}
\\
e_{n-i,n-j}
\ar@{-}[r]
&
e_{n-j-i,n-j}
\ar@{-}[r]
&
e_{n-i-j,n-i}
&
{}
&
e_{j,n-i}
}
\end{center}
\caption{Let $i<j<n$; then $n-j < n-i$. Moreover, let $i < n-j$ (then $j < n-i$)
and $j < n-j$ (then $i < n-i$). Note that we need $n>4$ to draw such a figure!\myend}
\label{fig:K}
\end{figure}


A more general analysis concerning {\bfseries\em all} the structures $\perspace(n,\sigma_\Phi,{\goth N})$
and a presentation of them as perspectives between other their (admissible) complete subgraphs is skipped over:
it strongly depends on the sequence $\Phi$ and it seems that nothing valuable can be proved in such a general 
setting -- except this, what was already presented.

\section{$n = 4$: the axis is the Veblen Configuration}\label{sec:veblen}

In this section we present a classification of configurations  $\perspace(4,\sigma_\Phi,{\goth N})$   
with $\Phi = (\phi_4,\phi_3,\phi_2)$, $\phi_4\in\bijfam_{I_3}$, $\phi_3\in\bijfam_{I_2}$ and $\phi_2(1) = 1$;
then
$\goth N$ is a $\konftyp((6,2,4,3)$-configuration i.e. $\goth N$ is the Veblen (Pasch) configuration
suitably labelled.
Let us quote after \cite{klik:VC} definitions of the labellings of the Veblen configuration defined on $\sub_2(I_4)$
together with the {\em star-triangles} $\starof(i)$ contained in them:
($Y\in\sub_3(I_4)$: $\topof(Y) := \sub_2(Y)$; $i_0\in I_4$:  $\topof(i_0) := \topof(I_4\setminus\{ i_0 \})$)
\begin{sentences}
\itemindent18.5ex
\def\labelsentence{veblen type \upshape(\thesentence):}
\item\label{vebtyp:1}
  $\GrasSpace(I_4,2)$ -- all four $\starof(i)$ with $i \in I_4$.
\item\label{vebtyp:2}
  $\GrasSpacex(I_4,2)$: its lines are the $\varkappa$-images of the lines of $\GrasSpace(I_4,2)$ 
  and we briefly write $\GrasSpacex(I_4,2) = \varkappa({\GrasSpace(I_4,2)})$ --
  no star-triangle.
\item\label{vebtyp:3}
  $\VeblSpace(2) = \big\langle \sub_2(I_4), \big\{ \topof(3),\; \topof(4), 
    \{ \skros(1,4),\skros(3,4),\skros(2,3) \}, \newline
    \{ \skros(2,4),\skros(3,4),\skros(1,3) \}
  \big\} \big\rangle$ -- $\starof(4)$ and $\starof(3)$ are  its unique star-triangles.
\item\label{vebtyp:4}
  ${\cal V}_5 = \big\langle \sub_2(I_4),\big\{ \topof(4), \{ \skros(1,3),\skros(2,4),\skros(3,4) \},
    \{\skros(1,2), \skros(1,4),\skros(3,4)\}, \newline
    \{ \skros(1,3),\skros(2,4),\skros(3,4)\} \big\} \big\rangle$ --
    $\starof(4)$ is its unique star-triangle;
\item\label{vebtyp:5}
  $\varkappa(\VeblSpace(2)) =: {\cal V}_4$ -- no star-triangle;
\item\label{vebtyp:6}
  $\varkappa({\cal V}_5) =: {\cal V}_6$ -- no star-triangle.
\end{sentences}
For any Veblen configuration $\goth V$ defined on $\sub_2(I_4)$ there is an isomorphism $\overline{\alpha}$
with $\alpha\in\ \bijfam_{I_4}$ of $\goth V$ and a one (exactly one!) of the above six.
To avoid technical troubles we have slightly changed our labelling in comparison 
with the definitions given in \cite{klik:VC}:
we have applied permutation $(1,3)(2,4)$ to the `original' labeling in case \eqref{vebtyp:3}, and
$(1)(2)(3,4)$ in case \eqref{vebtyp:4}.
\par
Evidently, for every $\varphi\in \bijfam_{I_4}$ the $\varphi$-image of any structure $\goth V$ of the above list is 
again a Veblen configuration isomorphic to $\goth V$; but (e.g.) $\perspace(4,\zeta_4,{\goth V})$
and $\perspace(4,\zeta_4,{\varphi({\goth V})})$ may stay non-isomorphic.
One can enumerate all cases when $\varphi({\goth V})$ has $\starof(4)$ as a triangle, but this will not suffice
to obtain a reasonable classification of the structures 
$\perspace(4,\sigma_\Phi,{\varphi({\goth V})})$.

Actually, we need to determine {\em all} labellings of the Veblen configuration
by the elements of $\sub_2(I_4)$.
%
%
By a way of an example on Figure \ref{qver4:gx} 
we show 
drawings presenting the schemas of $\perspace(4,\zeta,{\GrasSpacex(I_4,2)})$ 
and of $\perspace(4,\zeta,{\GrasSpacex(I_4,2)})$;
but remember that these are merely two among eighteen!

\def\vebwpis{{\mathscr V}}

\begin{figure}
\begin{center}
\begin{tabular}{cc}
\ifbqpd  \includegraphics[scale=0.38]{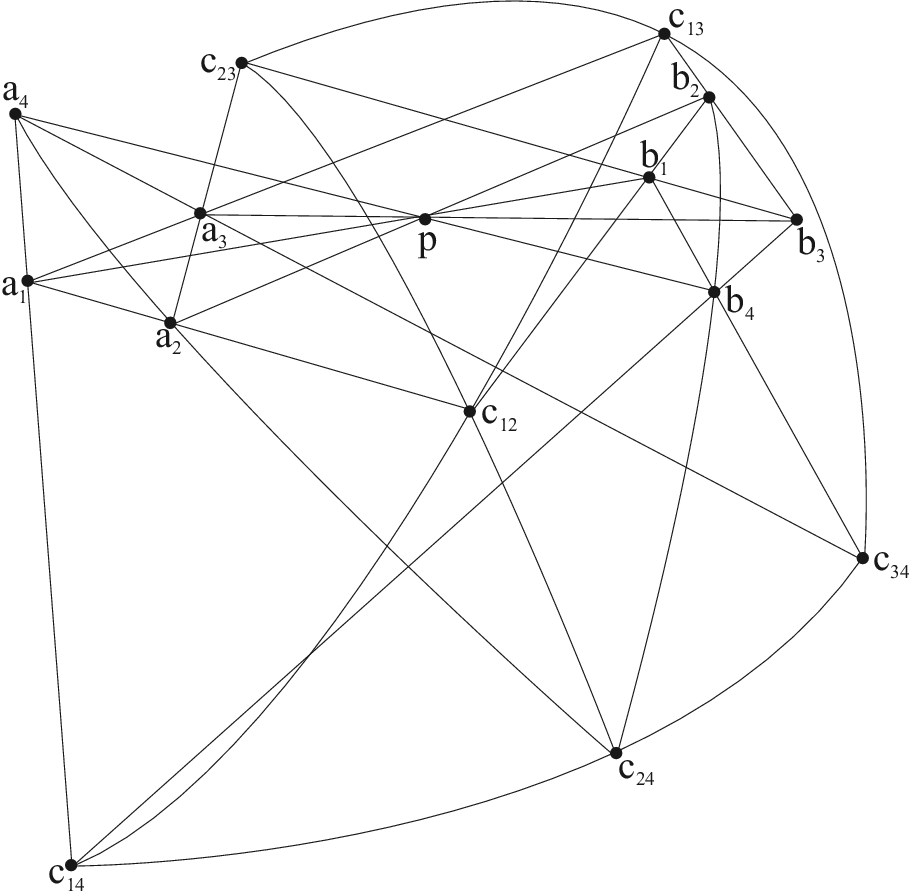}
\else  \includegraphics[scale=0.38]{qver4_gx1.eps}\fi
&
\ifbqpd  \includegraphics[scale=0.38]{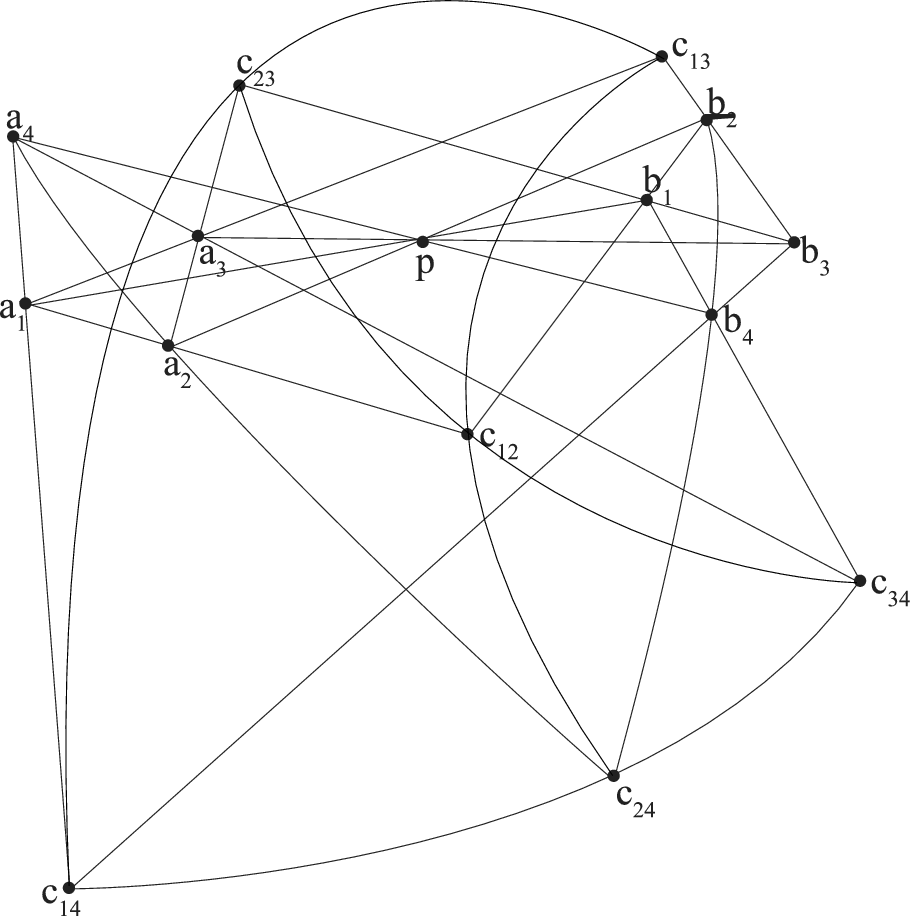}
\else  \includegraphics[scale=0.38]{qver4_gx2.eps}\fi
\end{tabular}
\end{center}
\caption{The structures $\perspace(4,\zeta_4,{\GrasSpacex(I_4,2)}) = \perspace(4,\zeta_4,\vebwpis_6(\id))$ (left)
and $\perspace(4,\zeta_4,\zeta_4({\GrasSpacex(I_4,2)})) = \perspace(4,\zeta_4,\vebwpis_6{\big( (1,2,3)(4) \big)})$ (right) %
(cf. definition of $\vebwpis_s(\mu)$ below).\myend}
\label{qver4:gx}
\end{figure}

Below we introduce a procedure of enumerating respective labellings in a way convenient for our analysis.
Suppose that $\goth V$ contains $\starof(i_0)$ as a triangle, and then it contains $\topof(i_0)$ as a line.
Let us introduce a numbering of the sides of $\starof(i_0)$ and of points of $\topof(i_0)$, invariant under permutations
of $\bijfam_{I_4\setminus \{ i_0 \}}$:
\begin{equation*}
  \LineOn(c_{i,4},c_{j,4}) \sim k \text{ and } c_{i,j} \sim k \text{ iff } \{i,j,k\} = I_4\setminus\{ i_0 \}.
\end{equation*}
Then the definition of $\goth V$ corresponds to a $\mu\in \bijfam_{I_4\setminus\{ i_0 \}}$ with the following rule
\begin{equation}\label{def:mu:trojk-star}
  k \sim \LineOn(c_{i,i_0},c_{j,i_0}) \text{ yields } c_{i,i_0} \oplus c_{j,i_0} = c_{i',j'} \sim \mu(k).
\end{equation}
Next, suppose that $\goth V$ contains $\topof(i_0)$ as a triangle, and then it contains $\starof(i_0)$ as a line.
Analogously to the above we introduce a numbering of the sides of $\topof(i_0)$ and of points of $\starof(i_0)$:
\begin{equation*}
  \LineOn(c_{i,i_0},c_{j,i_0}) \sim k \text{ and } c_{k,i_0} \sim k \text{ iff } \{i,j,k\} = I_4\setminus\{ i_0 \}.
\end{equation*}
Then the definition of $\goth V$ corresponds to a $\mu\in \bijfam_{I_4\setminus\{ i_0 \}}$ with the following rule
\begin{equation}\label{def:mu:trojk-top}
  k \sim \LineOn(c_{i,k},c_{j,k}) \text{ yields } c_{i,k} \oplus c_{j,k} = c_{i',j'} \sim \mu(k).
\end{equation}
Let $\mu\in \bijfam_{I_4\setminus \{i_0\}}$; we write 
$\vebwpis_5(\mu)$ for the Veblen configuration defined by \eqref{def:mu:trojk-star}:
it has $\topof(i_0)$ as a line,
and 
$\vebwpis_6(\mu)$ for the Veblen configuration defined by \eqref{def:mu:trojk-top}:
it has $\starof(i_0)$ as a line.
\par
Next, note that in accordance with the rules above, 
${\goth V} = \vebwpis_s(\mu)$ ($\mu\in \bijfam_{I_4\setminus \{ i_0 \}}$) has another
star-triangle $\starof(i'_0)$ ($s=5$) 
or another top-triangle $\topof(i'_0)$ ($s=6$) iff 
$\mu(i'_0) = i'_0$. In other words, $\mu = (i'_0)(j_1,j_2)$.
It is easy to compute that then ${\goth V} = \vebwpis_s((i_0)(j_1,j_2))$.
\par
Since under every labelling by the elements of $\sub_2(I_4)$ the Veblen configuration  contains either at least
one top-line or at least one star-line, each Veblen configuration
has either the form $\vebwpis_5(\mu)$ or $\vebwpis_6(\mu)$
for some $\mu\in \bijfam_{I_4\setminus \{i_0\}}$ and $i_0\in I_4$.
So, each Veblen configuration $\goth V$ can be uniquely associated with a permutation
$\mu\in \bijfam_4$ with at least one fixed point (not a derangement of $I_4$)
and a `switch' $s \in\{ 5,6 \}$
so as ${\goth V} = \vebwpis_s(\mu)$.
The following is evident
%
%
\begin{ctext}
  $\vebwpis_5(\id) = \GrasSpace(I_4,2)$, \quad
  $\vebwpis_5((3)(4)(1,2)) = \VeblSpace(2)$,\quad
  $\vebwpis_5((4)(1,2,3))  = {\cal V}_5$, 
  \\
  $\vebwpis_6(\id) = \GrasSpacex(I_4,2)$,\quad
  $\vebwpis_6((3)(4)(1,2)) = {\cal V}_4$,\quad
  $\vebwpis_6((4)(1,2,3)) = {\cal V}_6$.
\end{ctext}

Let us note the following tricky observation, justified on  Figure \ref{fig:kapa-od-veba}.
%
\begin{fact}
  For every $s\in \{ 5,6 \}$ and $\mu\in \bijfam_4$ with $\Fix(\mu)\neq\emptyset$
  the following holds
  \begin{ctext}
    $\varkappa(\vebwpis_s(\mu)) = \vebwpis_{11-s}(\mu)$.
  \end{ctext}
\end{fact}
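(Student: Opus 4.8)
The plan is to make the operation $\varkappa$ completely explicit and then verify the asserted equality line by line. Following \cite{klik:VC}, $\varkappa$ is the involution induced on configurations defined on $\sub_2(I_4)$ by the complementation $\kappa\colon u\mapsto I_4\setminus u$: it keeps the point set fixed and replaces each line $L$ by $\kappa(L)=\{I_4\setminus u\colon u\in L\}$. That this is indeed the $\varkappa$ of the six veblen types is confirmed by any defining instance, e.g. $\kappa$ carries the four lines $\topof(i)$ of $\GrasSpace(I_4,2)$ onto the four lines $\starof(i)$, so that $\kappa(\GrasSpace(I_4,2))=\GrasSpacex(I_4,2)$. The single property of $\kappa$ driving everything is that $\kappa(\starof(i_0))=\topof(i_0)$ and $\kappa(\topof(i_0))=\starof(i_0)$ for each $i_0$, refined by the two label-matching identities below. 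Since $\kappa$ commutes with every relabeling $\overline{\alpha}$, both $\vebwpis_s(\mu)$ and $\varkappa$ are equivariant under $\bijfam_{I_4}$; as $\Fix(\mu)\neq\emptyset$ I may therefore fix $i_0\in\Fix(\mu)$ and assume $i_0=4$, so that $\mu\in\bijfam_{\{1,2,3\}}$.

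First I would write out the lines of both configurations from the defining rules \eqref{def:mu:trojk-star} and \eqref{def:mu:trojk-top}. For $\vebwpis_5(\mu)$ the lines are the top-line $\topof(4)$ together with the three sides of the star-triangle $\starof(4)$, where the side labelled $k$, namely $\LineOn(c_{i,4},c_{j,4})$ with $\{i,j,k\}=\{1,2,3\}$, acquires as its third point the top point labelled $\mu(k)$, i.e. the $c_{i',j'}$ with $\{i',j',\mu(k)\}=\{1,2,3\}$. For $\vebwpis_6(\mu)$ the lines are the star-line $\starof(4)$ together with the three sides of the top-triangle $\topof(4)$, where the side labelled $k$, namely $\LineOn(c_{i,k},c_{j,k})$, acquires as its third point the star point labelled $\mu(k)$, i.e. $c_{\mu(k),4}$.

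The heart of the argument is that $\kappa$ is compatible with both numbering conventions at once. Concretely, $\kappa$ sends the top point $c_{i,j}$ labelled $k$ (so $I_4\setminus\{i,j\}=\{k,4\}$) to the star point $c_{k,4}$, which carries the label $k$ in the $\vebwpis_6$-scheme, and dually sends $c_{\mu(k),4}$ back to the top point labelled $\mu(k)$. Likewise it sends the side $\LineOn(c_{i,4},c_{j,4})$ of $\starof(4)$ labelled $k$ onto the side $\LineOn(c_{i,k},c_{j,k})$ of $\topof(4)$ labelled $k$, because $\kappa(c_{i,4})=c_{j,k}$ and $\kappa(c_{j,4})=c_{i,k}$ when $\{i,j,k\}=\{1,2,3\}$. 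Thus the label $k$ and the value $\mu(k)$ are preserved verbatim, with no reindexing. Consequently $\kappa$ carries the top-line of $\vebwpis_5(\mu)$ to the star-line of $\vebwpis_6(\mu)$, and each side-$k$ line of the first (third point labelled $\mu(k)$) to the side-$k$ line of the second (third point again labelled $\mu(k)$); hence $\varkappa(\vebwpis_5(\mu))=\vebwpis_6(\mu)$. Since $\kappa$ is an involution, applying it once more gives $\varkappa(\vebwpis_6(\mu))=\vebwpis_5(\mu)$, and both cases are summarized by $\varkappa(\vebwpis_s(\mu))=\vebwpis_{11-s}(\mu)$.

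The only genuine obstacle is the bookkeeping in the previous paragraph: the two numbering schemes \eqref{def:mu:trojk-star} and \eqref{def:mu:trojk-top} are set up differently — one indexes a side of the star-triangle and a point of the top-line by the ``missing'' index, the other indexes a side of the top-triangle and a point of the star-line directly — and one must check that complementation intertwines them so that the \emph{same} permutation $\mu$ governs both configurations. Once this label-preservation is verified, the remaining work is the routine line matching indicated above, which is exactly what Figure \ref{fig:kapa-od-veba} displays.
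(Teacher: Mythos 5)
Your proposal is correct and takes essentially the same approach as the paper: the paper's entire justification is Figure \ref{fig:kapa-od-veba}, which records precisely the verification you spell out --- complementation $u\mapsto I_4\setminus u$ swaps top points/sides with star points/sides while preserving the label $k$ and the third-point value $\mu(k)$, so the same $\mu$ governs both labellings. Your explicit algebraic bookkeeping (together with the harmless equivariance reduction to $i_0=4$ and the involutivity argument for the case $s=6$) is just the written-out form of what that figure displays.
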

The following is evident.
\begin{fact}\label{fct:zlepka-0}
  Let $\mu,\alpha\in\bijfam_{I_4}$, with $\Fix(\mu)\neq\emptyset$, $s \in \{ 5,6 \}$
  and let $\Phi=(\phi_4,\phi_3)$ determine a suitable skew. Then
  \begin{ctext}
    $\overline{\alpha}\big(\vebwpis_s(\mu)\big) = \vebwpis_s(\mu^\alpha)$.
  \end{ctext}
  Consequently, if $\alpha = \id_{I_4}$ or $\alpha = (1,2)(3)(4)$ (cf. \ref{lem:sprzeg3}) then
  \begin{ctext}
    $\overline{\alpha}\big( \perspace(4,\sigma_\Phi,\vebwpis_s(\mu)) \big) =
    \perspace(4,\sigma_{\Phi^\alpha},\vebwpis_s(\mu^\alpha))$.
  \end{ctext}
  \begin{note*}
    So, enumerating all the $\sigma_\Phi$-perspectives along admissible sequences $\Phi$ and after that along 
    {\em all the admissible} $\mu$ one can cancel one item in every pair $(\Phi,{\Phi}^{(1,2)(3)(4)})$.
  \end{note*}
  \begin{note*}
    If, additionally, $\Phi = {\Phi}^{(1,2)(3)(4)}$ then in the list of considered $\mu$ one can cancel one item in
    every pair $(\mu,{\mu}^{(1,2)(3)(4)})$.
  \end{note*}
  \begin{note*}
    Analogously, in view of \ref{lem:zamiana:azb} we can cancel one item in every pair $(\Phi,\Phi^{-1})$. 
  \end{note*}
\end{fact}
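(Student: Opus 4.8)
The plan is to prove the two assertions separately: first the purely configurational identity $\overline{\alpha}\big(\vebwpis_s(\mu)\big) = \vebwpis_s(\mu^\alpha)$ for every $\alpha\in\bijfam_{I_4}$ by a direct equivariance computation against the numbering rules \eqref{def:mu:trojk-star} and \eqref{def:mu:trojk-top}, and then to obtain the statement about the perspectives by feeding this identity, together with Lemma \ref{lem:sprzeg1}, into Proposition \ref{prop:iso0}.

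For the first identity I would fix the switch $s$ and a centre $i_0\in\Fix(\mu)$ with which $\vebwpis_s(\mu)$ is presented, and trace how $\overline{\alpha}$ transports the combinatorial data. Since $\overline{\alpha}(\{i,i_0\}) = \{\alpha(i),\alpha(i_0)\}$, the map carries the star $\starof(i_0)$ onto $\starof(\alpha(i_0))$ and the top $\topof(i_0)$ onto $\topof(\alpha(i_0))$; hence the choice of switch (whether the star or the top is a line) is preserved, and $\alpha(i_0)\in\Fix(\mu^\alpha)$, so $\alpha(i_0)$ is a legitimate centre for $\vebwpis_s(\mu^\alpha)$. The core of the argument is the transformation of the invariant numbering: a side $\LineOn(c_{i,i_0},c_{j,i_0})\sim k$ (so $\{i,j,k\}=I_4\setminus\{i_0\}$) is sent to the side $\LineOn(c_{\alpha(i),\alpha(i_0)},c_{\alpha(j),\alpha(i_0)})$ of $\starof(\alpha(i_0))$, which carries the number $\alpha(k)$ because $\{\alpha(i),\alpha(j),\alpha(k)\}=I_4\setminus\{\alpha(i_0)\}$; likewise the top-point numbered $\mu(k)$ is sent to a point numbered $\alpha(\mu(k))$. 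Thus in $\overline{\alpha}\big(\vebwpis_s(\mu)\big)$ the side numbered $\alpha(k)$ passes through the point numbered $\alpha(\mu(k))$; writing $m=\alpha(k)$ this says ``side $m$ meets point $(\alpha\mu\alpha^{-1})(m)=\mu^\alpha(m)$'', which is precisely the defining rule for $\vebwpis_s(\mu^\alpha)$. The case $s=6$ is handled verbatim, with \eqref{def:mu:trojk-top} in place of \eqref{def:mu:trojk-star} and the roles of star and top interchanged.

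For the consequence I would use the first identity only for $\alpha=\id_{I_4}$ and $\alpha=(1,2)(3)(4)$, which by Lemma \ref{lem:sprzeg3} are exactly the permutations for which $\Phi^\alpha$ again satisfies \eqref{def:seqstep-0}, so that $\sigma_{\Phi^\alpha}$ is a skew of the required form. The point-map $f$ induced by $\alpha$ via \eqref{propiso0:typ1} is a permutation of $\cal P$ fixing $p$, and I would verify that it is an isomorphism by Proposition \ref{prop:iso0}: with $\varphi=\alpha$, $\sigma_1=\sigma_\Phi$, $\sigma_2=\sigma_{\Phi^\alpha}$, the isomorphism requirement \eqref{iso0:war1-1} is $\overline{\alpha}\big(\vebwpis_s(\mu)\big)=\vebwpis_s(\mu^\alpha)$ (the first claim), while the compatibility \eqref{iso0:war2} reads $\overline{\alpha}\circ\sigma_\Phi=\sigma_{\Phi^\alpha}\circ\overline{\alpha}$, i.e. $(\sigma_\Phi)^{\overline{\alpha}}=\sigma_{\Phi^\alpha}$, which is exactly Lemma \ref{lem:sprzeg1}. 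Since on $C$ the map $f$ is $c_u\mapsto c_{\overline{\alpha}(u)}$, the resulting isomorphism is literally the image map $\overline{\alpha}$, giving $\overline{\alpha}\big( \perspace(4,\sigma_\Phi,\vebwpis_s(\mu)) \big) = \perspace(4,\sigma_{\Phi^\alpha},\vebwpis_s(\mu^\alpha))$.

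The only delicate point I anticipate is the bookkeeping in the equivariance step: one must confirm that the conjugation produced is $\alpha\mu\alpha^{-1}$ and not $\alpha^{-1}\mu\alpha$, which comes down to checking that $\overline{\alpha}$ multiplies the side-number and the point-number it determines (the input and the output of $\mu$) simultaneously by $\alpha$, so that passing to $m=\alpha(k)$ yields $\mu^\alpha$. Everything else — preservation of the switch $s$, the fact that $\alpha(i_0)$ stays a fixed point, and the reduction of the perspective statement to Lemma \ref{lem:sprzeg1} and Proposition \ref{prop:iso0} — is routine once this equivariance is in place, which is why the statement is recorded as evident.
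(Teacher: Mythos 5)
Your proof is correct: the paper records this fact as ``evident'' and gives no argument, and your verification --- the equivariance computation on the labelling rules \eqref{def:mu:trojk-star} and \eqref{def:mu:trojk-top} for the first identity, then feeding it together with Lemma \ref{lem:sprzeg1} and Lemma \ref{lem:sprzeg3} into Proposition \ref{prop:iso0} for the statement about perspectives --- is exactly the reasoning the paper leaves to the reader. You also settle the one genuinely delicate point correctly: the conjugation produced is $\alpha\circ\mu\circ\alpha^{-1}$, which matches the paper's convention $\mu^\alpha = \alpha\circ\mu\circ\alpha^{-1}$ (as read off from the derivation preceding Lemma \ref{lem:sprzeg1}).
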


\begin{figure}\def\zakos{{/\mkern-8mu/}}
\begin{footnotesize}
\xymatrix{%
{\{\mu(j),i_0\}\zakos\not\ni\mu(j)}\ar@{-}[r]\ar@{-}[dr]       &  
{\{\mu(k),i_0\}\zakos\not\ni\mu(k)}\ar@{-}[rrrr]\ar@{-}[d] & {\strut} & {\strut} &  {\strut} & 
{\{\mu(i),i_0\}\zakos\not\ni\mu(i)}
\\
{\strut} &  {\{j,k\}\zakos\{i_0,i\}}\ar@{-}[dd]^{k}\ar@{-}[dr]^{j}
\\
{\strut} & {\strut} & {\{i,j\}\zakos\{i_0,k\}} \ar@{-}[uurrr] 
\\
{\strut} & {\{i,k\}\zakos\{i_0,j\}} \ar@{-}[ur]^{i}
} 
\end{footnotesize}
\caption{%
Comparing $\vebwpis_6(\mu)$ and its $\varkappa$-image, $\mu(i_0) = i_0$.
$I_4 = \{ i,j,k,i_0 \}$.
Points on the diagram are denoted following the convention: 
$\text{ value-of-}u \zakos\text{value-property-of-}\varkappa(u)$ with $u\in\sub_2(I_4)$,
where the `starting' structure $\vebwpis_6(\mu)$
has the line $\starof(i_0)$ and the triangle $\topof(i_0)$.
\myend}
\label{fig:kapa-od-veba}
\end{figure}

If ${\goth M}= \perspace(4,\sigma_\Phi,{\goth V})$ freely contains three $K_5$ 
then, in accordance with \cite{STP3K5}, $\goth M$ can be presented as a so called 
{\em system of triangle perspectives} (\STP, in short).
Let us start with a slight reminder of this representation technique of \cite{STP3K5}.
We arrange the vertices of three triangles of $\goth M$: 
$\Delta_1 = \{ a_1,a_2,a_3 \}$, $\Delta_2 = \{ b_1,b_2,b_3 \}$, and $\Delta_3 = \starof(4)$ in three rows of 
a $3\times 3$-matrix in such a way that when we join in pairs points in the same two columns, 
the obtained lines of $\goth M$ have a common point. So obtained three common points form the line $\topof(4)$. 
After that we join points in distinct rows when there is a line in $\goth M$ which joins them:
these lines for every pair of rows should meet in a common point.
On Figure \ref{fig:schemat} we visualize a schema of this procedure.

\begin{figure}
\begin{center}
\begin{minipage}[m]{0.45\textwidth}
    \xymatrix{%
    {\Delta_1:}
    &
    {a_{1}}\ar@{-}[dr]
    &
    {a_{2}}\ar@{-}[dl]
    &
    {a_{3}}\ar@{-}[d]
    \\
    {\Delta_2:}
    &
    {b_{2}}
    &
    {b_{1}}
    &
    {b_{3}}
    \\
    {\Delta_3:}
    &
    {c_{i,4}}
    &
    {c_{j,4}}
    &
    {c_{k,4}}
    }
  \end{minipage}
  \quad\quad\quad
  \begin{minipage}[m]{0.45\textwidth}
    $a_4 \in \LineOn(a_i,c_{4,i}),\; i = 1,2,3$.
    \par
    $b_4 \in \LineOn(b_2,c_{2,4}),\; \LineOn(b_1,c_{3,4}),\;\LineOn(b_3,c_{1,4})$.
    \par
    \strut\par
    {$c_{1,2}\in \LineOn(a_1,a_2),\; \LineOn(b_2,b_1),\; \LineOn(c_{i,4},c_{j_4})$},\newline
    {$c_{2,3}\in \LineOn(a_2,a_3),\; \LineOn(b_1,b_3),\; \LineOn(c_{j,4},c_{k_4})$},\newline
    {$c_{1,3}\in \LineOn(a_1,a_3),\; \LineOn(b_2,b_3),\; \LineOn(c_{i,4},c_{k_4})$}.
  \end{minipage}
\end{center}
\caption{A schema of the diagram of the line $\topof(4)$ in $\perspace(4,\zeta,{\goth V})$, 
$\{i,j,k\} = \{ 1,2,3\}$. $\goth V$ is a labelling of the Veblen configuration which contains
a free triangle $\starof(4)$ and, consequently, $\topof(4)$ as a line.%
\myend}
\label{fig:schemat}
\end{figure}

It is known that after such a representation the obtained structures are (with a few exceptions) isomorphic
when the associated diagrams are isomorphic (can be mapped one onto the other by a permutation of rows and
columns). From Figure \ref{fig:schemat} we read that the diagram is determined by 
the permutation $\{i,j\}\mapsto\{i',j'\}$: $c_{i',j'} = c_{i,4}\oplus c_{j,4}$ with $1\leq i, j \leq 3$.

This technique allows us to formulate a complete characterization of the structures of the form 
$\perspace(4,\sigma_\Phi,{\goth V})$.
%
With a pouring computer-aided computations we obtain the following theorem.
\begin{thm}\label{thm:lista4}{}
  \begin{sentences}\itemsep-2pt
  \item\label{lista4-1}
    There are $104$ pair wise non isomorphic $\konftyp(15,4,20,3)$ configurations of the form 
    $\perspace(4,\sigma_{\Phi},{\goth V})$ which freely contain exactly two $K_5$-graphs where
    $\Phi = (\phi_4,\phi_3)$ satisfies \eqref{def:seqstep-0}, 
    $\sigma_\Phi\neq\overline{\alpha}$ for every $\alpha\in\bijfam_{I_4}$,  and $\goth V$ is a Veblen configuration 
    defined on $\sub_2(I_4)$. 
  \item\label{lista4-2}
    Let $\goth M$ be a skew perspective $\perspace(4,\sigma_\Phi,{\goth V})$ 
    with $\Phi$, $\goth V$ as in \eqref{lista4-1}. 
    Then $\goth M$  freely
    contains at least three $K_5$-graphs iff it is isomorphic to a one of 11 {\STP}'s of \cite{STP3K5}:
    those enumerated in \cite[Classification 2.8]{STP3K5} with (i), (ii), (iv), (v), and (xiii) excluded and
    (ii), (iii) in \cite[Remark 2.10]{STP3K5}.
  \end{sentences}
\end{thm}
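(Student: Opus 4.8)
The plan is to treat the statement as a finite classification: the parameter space is tiny, so the real work is (a) organizing the enumeration with the reduction lemmas already available, and (b) packaging the isomorphism test of Proposition \ref{prop:iso0} into a mechanical check. First I would delimit the candidate structures. A sequence $\Phi = (\phi_4,\phi_3,\phi_2)$ obeying \eqref{def:seqstep-0} is given by $\phi_4 \in \bijfam_{I_3}$ and $\phi_3 \in \bijfam_{I_2}$ (with $\phi_2 = \id$ forced), so there are $6 \cdot 2 = 12$ of them; by Lemma \ref{lem:seq-perm} exactly two yield $\sigma_\Phi = \overline{\alpha}$ (the all-identity sequence and the one with $\phi_3 = (1,2)$), and these are excluded by hypothesis, leaving $10$ admissible $\Phi$. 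On the other side, the discussion preceding Fact \ref{fct:zlepka-0} shows that every Veblen labeling of $\sub_2(I_4)$ is $\vebwpis_s(\mu)$ for a unique switch $s \in \{5,6\}$ and a unique non-derangement $\mu \in \bijfam_{I_4}$; there are $24 - 9 = 15$ such $\mu$, hence $30$ labelings. Thus at most $10 \cdot 30 = 300$ structures $\perspace(4,\sigma_\Phi,\vebwpis_s(\mu))$ must be examined, a finite list.

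Second, I would reduce the isomorphism problem to Proposition \ref{prop:iso0}. In part \eqref{lista4-1} the structure has exactly the two free $K_5$-graphs $K_{A^\ast}$ and $K_{B^\ast}$, whose unique common point is $p$; any isomorphism must carry the pair of free $K_5$'s onto the pair of free $K_5$'s and hence transport their intersection, so we may normalise to $f(p)=p$ and apply Proposition \ref{prop:iso0} verbatim. The test then becomes: for each ordered pair of candidates and each of the $24$ permutations $\varphi \in \bijfam_{I_4}$, check whether the ``direct'' conditions \eqref{iso0:war1-1}--\eqref{iso0:war2} or the ``swap'' conditions \eqref{iso0:war1-2}--\eqref{iso0:war3} hold. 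Before running this, I would prune the list with the free-of-charge equivalences: Fact \ref{fct:zlepka-0} identifies $\perspace(4,\sigma_\Phi,\vebwpis_s(\mu))$ with $\perspace(4,\sigma_{\Phi^\alpha},\vebwpis_s(\mu^\alpha))$ for $\alpha=(1,2)(3)(4)$, and Lemma \ref{lem:zamiana:azb} identifies it with $\perspace(4,\sigma_\Phi^{-1},\sigma_\Phi(\vebwpis_s(\mu)))$, so whole orbits under $\Phi \mapsto \Phi^{\alpha}$ and $\Phi \mapsto \Phi^{-1}$ collapse. What remains is a bounded computation, and counting the resulting classes with exactly two free $K_5$'s yields the number $104$ asserted in \eqref{lista4-1}.

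Third, for the dichotomy ``exactly two'' versus ``at least three'' free $K_5$-graphs I would invoke Proposition \ref{prop:nextgraf00}: a third free subgraph is necessarily $G_{(i_0)}$, present precisely when $\starof(i_0)$ is a free clique of $\goth V$ and $\Phi$ satisfies \eqref{def:extragraf0zet}. The star-triangles are read directly off $(\mu,s)$: for $s=5$ they are exactly the fixed points of $\mu$, whereas for $s=6$ there is no star-triangle at all (each $\starof(i_0)$ being a line), so only $s=5$ can produce a third $K_5$. The condition \eqref{def:extragraf0zet} translates for $n=4$ into the transparent requirements that $i_0=4$ is automatic, $i_0=3$ needs $\phi_4(3)=3$, and $i_0\in\{1,2\}$ additionally forces $\phi_3=\id$ together with $\phi_4(i_0)=i_0$. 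Sorting the candidates by this criterion cleanly separates those belonging to part \eqref{lista4-1} from those of part \eqref{lista4-2}.

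Finally, for part \eqref{lista4-2} the structures with $\ge 3$ free $K_5$'s are handled by the representation technique recalled around Figure \ref{fig:schemat}: each such $\goth M$ is an \STP, whose isomorphism type is governed (up to the few documented exceptions) by the diagram permutation $\{i,j\}\mapsto\{i',j'\}$ with $c_{i',j'} = c_{i,4}\oplus c_{j,4}$. I would compute this diagram for every surviving candidate and match it against \cite[Classification 2.8]{STP3K5} and \cite[Remark 2.10]{STP3K5}, arriving at the eleven listed items. The main obstacle throughout is not any single conceptual step but the completeness and correctness of the bookkeeping: one must be sure that Proposition \ref{prop:iso0} together with the reduction lemmas accounts for \emph{every} isomorphism (in particular that no $p$-moving isomorphism is missed in part \eqref{lista4-1}, which is exactly why the ``exactly two $K_5$'' hypothesis is essential), and that the exceptional cases in the \STP-to-diagram correspondence are resolved correctly when reading off the eleven classes.
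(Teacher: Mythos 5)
Your proposal is correct and is essentially the paper's own proof: the theorem is established there by exactly this computer-aided finite enumeration (recorded in Notes \ref{szczegoly1} and \ref{szczegoly2}), using Lemma \ref{lem:seq-perm}, Fact \ref{fct:zlepka-0} and Lemma \ref{lem:zamiana:azb} to prune the parameter space, Proposition \ref{prop:iso0} as the isomorphism test (with the normalisation $f(p)=p$, legitimate precisely because there are exactly two free $K_5$-graphs), Proposition \ref{zejer:nextgraf} to detect a third free $K_5$ (forcing $s=5$ and your fixed-point conditions on $\Phi$), and the \STP{} diagram technique of \cite{STP3K5} for part (ii). The differences are purely organisational: the paper first collapses the sequences $\Phi$ into conjugacy-class representatives before enumerating and delegates the final bookkeeping to Maple, whereas you enumerate all admissible $\Phi$ and prune afterwards.
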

\begin{note}\normalfont
  Let us remind, after \ref{lem:seq-perm} and \ref{fct:zlepka-0}, that if $\Phi$ satisfies \eqref{def:seqstep-0}
  and $\sigma_{\Phi} = \overline{\alpha}$ with $\alpha\in\bijfam_{I_4}$ then we can assume that $\alpha = \id$,
  $\sigma_{\overline{\alpha}} = \id_{\sub_2(I_4)}$,
  and all the perspectives of the form $\perspace(4,\id_{\sub_2(I_4)},{\goth V})$ were already classified in
  \cite{maszko:male}.
\end{note}
Practically, explicit enumerating all the $104$ configurations of \eqref{lista4-1} together with values of
parameters which yield isomorphic items does not give any essential information.
However, to complete the course let us briefly say a few words on the subject.
\def\PHI{\mathit{PHI}} \def\MU{\mathit{MU}}
\begin{note}\label{szczegoly1}\normalfont
  Consider 
  ${\goth M}$ =  $\perspace(4,{\sigma_{\Phi}},{\vebwpis_s(\mu)})$ 
  where 
  $\Phi = (\phi_4,\phi_3)$ satisfies \eqref{def:seqstep-0}, $s \in \{ 5,6 \}$, and $\mu\in\bijfam_{I_4}$
  with $\Fix(\mu)\neq\emptyset$.
  Taking into account \ref{fct:zlepka-0} we can restrict ourselves to the following list $\PHI$ of permutations:
  \begin{multline}
   \PHI =  ([(1)(2)(3), (1)(2)], \;[(1)(2)(3),(1,2)], \;[(1) (2,3), \,(1)(2)], \;[(2) (1,3), \,(1,2)], \, 
   \\
     [(1,2)(3), \,(1)(2)], \;[(1,2)(3)], \,(1,2)], 
    \;[(1,2,3), \,(1)(2)], \;[(1,2,3)), \,(1,2)]).
  \end{multline}
  %
  Note that  $\sigma_{\PHI[1]} = \overline{\id_{I_4}}$, 
  $\sigma_{\PHI[4]} = \zeta_4$,
  $(\PHI[i])^{-1} = \PHI[i]$ for $i=2,..,6$, 
  $\sigma_{\PHI[7]}$ has order $3$,
  and $\sigma_{\PHI[8]}$ has order 6.
  Next, the admissible $\mu$ can be arranged into the following sequence $\MU$ of lenght 15:
  \begin{multline}
   \MU = (\,(1)(2)(3)(4), \;  (1,2,3)(4), \; (1,3,2)(4), \; 
   (1,2,4)(3), \; (1,4,2)(3), \; 
   (1,3,4)(2),  \\ (1,4,3)(2), \; 
   (1)(2,3,4), \; (1)(2,4,3), \;
   (1)(2)(3,4), \; (1),(2,4)(3), \; (1)(4)(2,3), \\ (1,4)(2)(3), \;  (1,3)(2)(4), \; (1,2)(3)(4) \,).
  \end{multline} \def\MM{{\mathscr M}}
  Let us denote $\MM(f,s,i) = \perspace(4,\sigma_{\PHI[f]},{\vebwpis_s({\MU[i]})})$ 
  for $f=1,...,8$, $s=5,6$, and $i=1,...,15$.
  In view of \ref{lem:seq-perm} and \ref{fct:zlepka-0} we can assume that $f\neq 1$.

  Suppose that $\MM(f,s,i)$ freely contains at least three $K_5$; then $s=5$. 
  Using \ref{zejer:nextgraf} we get the following.
  If $G_{(4)}$ is an additional $K_5$ then $i\in\{1,2,3,12,14,15\}$, and $f$ is arbitrary.
  If $G_{(3)}$ is a $K_5$ then $f\in\{ 1,2,5,6 \}$ and $i\in \{ 1,4,5,11,13,15 \}$.
  $G_{(2)}$ cannot be a free $K_5$ for any $f$.
  $G_{(1)})$ is a free $K_5$ for $f = 3$ and $i \in \{ 1,8,9,10,11,12 \}$.
  Summing (and using a Maple program which sayes for which parameters the corresponding perspectives
  are pair wise isomorphic) we conclude that $\MM(f,s,i)$ freely contains at least three $K_5$ iff $s=5$ and 
  $(f,i)$ are among the following:
  \begin{multline}\label{listaaa}
   (2,1), (2,2), (2,3),  (2,4), (2,5), (3,1), (3,2), (3,3), (3,8), (3,9), (3,15), (4,1), (4,2),\\
   (4,12), (4,14), (5,1), (5,2), (5,3), (5,4), (6,1), (6,2), (6,4) (6,11), (6,12), (6,15), \\
   (7,1), (7,2), (7,12), (7,14), (7,15),  (8,1), (8,2), (8,3), (8,12), (8,14), (8,15). 
  \end{multline}
  %
  Diagrams of the respective configurations computed by a Maple program prove \eqref{lista4-2} of \ref{thm:lista4}.
  \par
  In the remaining cases, taking into account \ref{lem:zamiana:azb} and \ref{prop:iso0} and determining
  (with the help of a Maple program) when $\sigma_{\PHI[f]}( \vebwpis_{s_2}(\MU[i_2]) ) = \vebwpis_{s_1}(\MU[i_1])$
  we get the following list of parameters of pair wise non isomorphic configurations $\MM(f,s,i)$
  with two free $K_5$:
  \begin{description}\itemsep-2pt
  \item[$f = 2$:] $s=6$ and $i= 1,2,3,4,5,6,7,8,9,10$;  
  \item[$f = 3$:] $s=6$ and $i= 1,2,3,4,5,6,7,8,9,13$;  
  \item[$f = 4$:] $s=5$ and $i= 4,5,6,8,9,10,11,13$,\/ $s=6$ and $i= 1,3,5,9,12,14$; 
  \item[$f = 5$:] $s=6$ and $i= 1,2,3,4,5,6,7,8,9,10$; 
  \item[$f = 6$:] $s=5$ and $i= 6,7,10$, $s=6$ and $i= 1,2,4,6,7,10,11,12,15$, 
  \end{description}
  Note that for $f=7,8$ we cannot get $\MM(f,s,i_i) \cong \MM(f,s,i_2)$ for $i_1\neq i_2$ when the respective
  perspectives have exactly two free $K_5$, so in this case pair wise non isomorphic are all $\MM(f,6,i)$ and
  $\MM(f,5,i)$ with $(f,i)$ not in the list \eqref{listaaa}.
\end{note}
\begin{note}\label{szczegoly2}\normalfont
  Note that if a system $\goth M$ is of type (ii) or (vi) of \cite[Classification 2.8]{STP3K5} then 
  ${\goth M} = \perspace(4,\sigma,{\goth V})$, where $\sigma = \sigma_\Phi$ and $\Phi$ is 
  defined in a way very similar to \eqref{def:seqstep-0}.
  With 
  \begin{ctext} 
  $\phi_1 = \begin{tabular}{c|c|c} 2 & 3 & 4 \\ \hline 3 & 1 & 4 \end{tabular}$ and
  $\phi_2 = \begin{tabular}{c|c} 3 & 4 \\ \hline 1 & 4 \end{tabular}$
  \end{ctext}
  we have $\sigma(\{ 1,i \}) = \{ 2,\phi_1(i) \}$ and $\sigma(\{ 2,j \}) = \{ 3,\sigma_2(j) \}$;
  clearly, $\sigma(\{ 3,4 \}) = \{  3,4\}$.
  For $\goth M$ of type (i), (iv), and (xiii) there is no such a pair $\Phi$.\myend
\end{note}

The automorphisms of the structures $\perspace(4,\sigma_\Phi,{\goth V})$ which freely contain three $K_5$
are determined in \cite{STP3K5} so, there is no need to write them down explicitly here.
Then, again using a Maple program to determine when (in the notation of \ref{szczegoly1}) the equation
$\sigma_{\PHI[f]}(\vebwpis_s(\MU[i])) = \vebwpis_s(\MU[i])$ holds we obtain the following theorem.
\begin{thm}
  Let ${\goth V} = \vebwpis_s(\mu)$ be a Veblen configuration defined on $\sub_2(I_4)$ and
  ${\goth M} = \perspace(4,\sigma_\Phi,{\goth V})$, $s\in\{ 5,6 \}$, $\mu \in \bijfam_{I_4}$,
  and $\Phi = (\phi_4,\phi_3)$ satisfy \eqref{def:seqstep-0}.
  Assume that $\goth M$ contains exactly two $K_5$.
  Then $\Aut({\goth M})$ is nontrivial only when
  \begin{description}\itemsep-2pt
  \item[$\Phi = (\,(2) (1,3), \;(1,2)\,)$ {\normalfont{and}}]  
  %
    \begin{description}\itemsep-2pt
    \item[$s = 5$:] $\mu \in \{ (4)(1,2,3), \, (1)(2,3,4), \, (1)(4)(2,3) \}$,
    \item[$s = 6$:] $\mu \in \{ (4)(1,3,2), \, (1)(2,4,3), \, (1)(4)(2,3)$;
    \end{description}
  \item[$\Phi = (\,(1,2)(3), \;(1,2)\,)$ {\normalfont{and}} ]
    \begin{description}\itemsep-2pt
    \item[$s = 5$ ] $\mu = (1)(2)(3,4)$.
    \item[$s = 6$], $\mu \in \{ (1)(2)(3)(4),\, (1)(2)(3,4), \, (1,2)(3)(4) \}$.
    \end{description}
  \end{description}
  %
  If $\Aut({\goth M})$ is nontrivial then $\Aut({\goth M}) = \{ \id, {\mathscr S}\} \cong C_2$.
\end{thm}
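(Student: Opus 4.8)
The plan is to use Proposition~\ref{prop:iso0} to convert the computation of $\Aut({\goth M})$ into a condition on the pair $(\sigma_\Phi,{\goth V})$, and then to settle the finitely many surviving cases by the same Maple enumeration used in \ref{szczegoly1}. First I would exploit the hypothesis that $\goth M$ carries exactly two free $K_5$'s, which by \ref{zejer:nextgraf} must be $K_{A^\ast}$ and $K_{B^\ast}$: any $f\in\Aut({\goth M})$ permutes the free $K_5$-subgraphs, hence permutes the two-element set $\{ K_{A^\ast},K_{B^\ast} \}$, and since $A^\ast\cap B^\ast=\{ p \}$ this forces $f(p)=p$. With $f(p)=p$ secured, Proposition~\ref{prop:iso0} applies with $\sigma_1=\sigma_2=\sigma_\Phi$ and ${\goth N}_1={\goth N}_2={\goth V}$, and presents every such $f$ through a $\varphi\in\bijfam_{I_4}$ in one of two mutually exclusive shapes: the \emph{$A$-preserving} shape \eqref{propiso0:typ1}, in which $\overline\varphi$ is an automorphism of $\goth V$ and $\overline\varphi$ commutes with $\sigma_\Phi$; or the \emph{$A$-swapping} shape \eqref{propiso0:typ2}, in which $\sigma_\Phi^{-1}\overline\varphi$ is an automorphism of $\goth V$ and $\overline\varphi\,\sigma_\Phi=\sigma_\Phi^{-1}\overline\varphi$.

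The next step is to show that the only nontrivial automorphism that can arise is $\mathscr S$. Taking $\varphi=\id$ in the $A$-swapping shape reproduces exactly $\mathscr S$, and by Lemma~\ref{lem:zamiana:azb} the target $\perspace(4,\sigma_\Phi^{-1},{\sigma_\Phi({\goth V})})$ coincides with $\goth M$ itself precisely when $\sigma_\Phi=\sigma_\Phi^{-1}$ and $\sigma_\Phi({\goth V})={\goth V}$; thus $\mathscr S\in\Aut({\goth M})$ exactly under these two equalities. It remains to rule out a $\varphi\neq\id$ in either shape. For $\sigma_\Phi=\zeta_4$ this is immediate: Lemma~\ref{lem:zeta-rigid} gives $\zeta_4^{\overline\varphi}=\zeta_4\Rightarrow\varphi=\id$, and Proposition~\ref{prop:zeta-rigid} already pins $\Aut$ down to $\{ \id,\mathscr S \}$. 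I expect the real obstacle to be skews for which the hypothesis of Proposition~\ref{prop:zeta-rigid} fails, the prime example being $\Phi=(\,(1,2)(3),(1,2)\,)$, where $\phi_4(3)=3$ so the index $3$ already satisfies the crossing condition \crosswar{3} of Lemma~\ref{lem:zeta-rigid0}. Here I would argue directly: the assumption of exactly two $K_5$, read through \ref{zejer:nextgraf}, forbids $\starof(3)$ and the remaining stars from being free cliques of $\goth V$, so the extra crossing cannot be upgraded to a third free simplex; a short inspection of the centraliser of $\sigma_\Phi$ intersected with the stabiliser of $\goth V$ then shows that no $\varphi\neq\id$ survives in either shape.

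Having reduced the whole question to the criterion $\Aut({\goth M})\neq\{ \id \}\iff \mathscr S\in\Aut({\goth M})\iff \big( \sigma_\Phi^2=\id \text{ and } \sigma_\Phi({\goth V})={\goth V} \big)$, the final step is the finite computation. Only the sequences $\Phi$ from the list of \ref{szczegoly1} whose skew is involutory are candidates, and for each such $\Phi$ and each $\goth V=\vebwpis_s(\mu)$ (with $\mu$ restricted by \ref{zejer:nextgraf} to those leaving exactly two $K_5$) I would solve $\sigma_\Phi({\vebwpis_s(\mu)})=\vebwpis_s(\mu)$ with the Maple routine of \ref{szczegoly1}. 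This singles out precisely the tabulated pairs, all of them for $\Phi=(\,(2)(1,3),(1,2)\,)$ (that is, $\sigma_\Phi=\zeta_4$) or $\Phi=(\,(1,2)(3),(1,2)\,)$. Finally, since $\mathscr S$ is involutory by Lemma~\ref{lem:zamiana:azb}, whenever it is an automorphism we obtain $\Aut({\goth M})=\{ \id,\mathscr S \}\cong C_2$, which completes the argument. The main obstacle throughout is the rigidity step of the second paragraph for the non-$\zeta_4$ involutory skews, where Proposition~\ref{prop:zeta-rigid} is unavailable and the exclusion of extra automorphisms must be extracted from the exact configuration of free cliques forced by the two-$K_5$ hypothesis.
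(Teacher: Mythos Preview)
Your proposal follows the paper's own line exactly: the paper's proof is the single sentence ``The claim is an immediate consequence of \ref{prop:zeta-rigid}'', combined with the Maple check of $\sigma_{\Phi}({\goth V})={\goth V}$ announced just before the theorem statement. You unpack this by first deriving $f(p)=p$ from the two-$K_5$ hypothesis, then invoking \ref{prop:iso0} to split into the $A$-preserving and $A$-swapping shapes, and finally reducing to the criterion $\sigma_\Phi^2=\id$ together with $\sigma_\Phi({\goth V})={\goth V}$ and the Maple enumeration --- which is precisely the content of \ref{prop:zeta-rigid} plus \ref{lem:zamiana:azb}.

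You are also right to flag that the literal hypothesis of \ref{prop:zeta-rigid} (``no $k$ with $\phi_j(k)=k$ for all $j>k$'') is not satisfied by every $\Phi$ on the list in \ref{szczegoly1}: besides your example $\Phi=(\,(1,2)(3),(1,2)\,)$, it also fails for $\PHI[2]$, $\PHI[3]$, and $\PHI[5]$ (each has a fixed index $k\in\{1,3\}$). The paper absorbs these cases into its one-line appeal to \ref{prop:zeta-rigid} without comment, so your ``short inspection'' of the centraliser of $\sigma_\Phi$ inside $\Aut({\goth V})$ is doing genuine work that the paper leaves implicit; note only that you should carry it out for all four exceptional $\Phi$, not just the one you name.
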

\begin{proof}
  The claim is an immediate consequence of  \ref{prop:zeta-rigid}. 
\end{proof}

\begin{figure}
\begin{center}
\begin{minipage}[m]{0.6\textwidth}
    \xymatrix{%
    {\Delta_1:}
    &
    {a_{3}}\ar@{-}[d]\ar@(l,l)@{-}[dd]
    &
    {a_{2}}\ar@{-}[dr]\ar@{-}[ddr]
    &
    {a_{1}}\ar@{-}[dl]\ar@{-}[ddl]
    \\
    {\Delta_2:}
    &
    {b_{3}}\ar@{-}[dr]
    &
    {b_{1}}\ar@{-}[dl]
    &
    {b_{2}}\ar@{-}[d]
    \\
    {\Delta_3:}
    &
    {c_{3,4}}
    &
    {c_{1,4}}
    &
    {c_{2,4}}
    }
  \end{minipage}
  \end{center}
  \caption{The diagram of the line $\{ c_{1,2}, c_{2,3}, c_{1,3} \} = \topof(4)$
  in $\perspace(4,\zeta,{\VeblSpace(2)}) = \perspace(4,\zeta_4,\vebwpis_5\big( (3)(1,2) \big))$: 
  comp. rules on Figure \ref{fig:schemat}.
\myend} 
\label{fig:PB2}
\end{figure}


\let\bibtem\bibitem

\noindent Author's address:\newline
{Agata Bazylewska-Zejer,
Ma{\l}gorzata Pra{\.z}mowska, Krzysztof Pra{\.z}mowski\\
Institute of Mathematics, University of Bia{\l}ystok\\
ul. Cio{\l}kowskiego 1M\\
15-245 Bia{\l}ystok, Poland\\
e-mail:  {\ttfamily agatazejer@gmail.com},  {\ttfamily malgpraz@math.uwb.edu.pl}, \newline\strut\hfill 
{\ttfamily krzypraz@math.uwb.edu.pl}

\end{document}